\newtheorem{theorem}{Theorem}
\newtheorem{corollary}{Corollary}
\newtheorem{lemma}{Lemma}
\newtheorem{definition}{Definition}
\begin{document}

\title{Schmidt-type theorems for partitions with uncounted parts}
\author{George E. Andrews, Pennsylvania State University \\ William J. Keith, Michigan Technological University}

\maketitle

\abstract{Schmidt's theorem is significantly generalized, to partitions in which periodic but otherwise arbitrary subsets of parts are counted or uncounted.  The identification of such sets of partitions with colored partitions satisfying certain specifications may be a generally useful tool for establishing sum-product $q$-series identities, examples of which are given.}

\section{Introduction}

In \cite{AndrewsPaule}, the first author and Peter Paule reproved and extended the following theorem of Schmidt \cite{Schmidt}:

\begin{theorem}\label{schmidt} Let $p(n)$ be the number of partitions $\lambda_1 + \lambda_2 + \dots$ of the integer $n$, and let $f(n)$ denote the number of partitions $\pi_1 + \pi_2 + \dots$ into distinct parts $\pi_i > \pi_{i+1}$ such that $n = \pi_1 + \pi_3 + \pi_5 + \dots$.  Then $f(n) = p(n)$ for all $n \geq 1$.
\end{theorem}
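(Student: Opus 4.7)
My plan is to establish Theorem \ref{schmidt} by computing the generating function $F(q) := \sum_{n \geq 0} f(n)\, q^n$ and matching it with $1/(q;q)_\infty = \sum_{n \geq 0} p(n)\, q^n$.

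First I would reparameterize a distinct partition $\pi_1 > \pi_2 > \dots > \pi_k \geq 1$ by its successive differences $d_j := \pi_j - \pi_{j+1}$ with $\pi_{k+1} := 0$, so each $d_j \geq 1$ and $\pi_i = d_i + d_{i+1} + \dots + d_k$. Exchanging the order of summation gives
\[
\pi_1 + \pi_3 + \pi_5 + \dots \;=\; \sum_{j=1}^{k} \lceil j/2 \rceil \, d_j,
\]
so the exponent depends on $j$ only through $\lceil j/2\rceil$.

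Next I would sum over $k \geq 0$, grouping the consecutive pairs $(d_{2i-1}, d_{2i})$. The contribution of length $k = 2m$ is $\prod_{i=1}^{m} (q^i/(1-q^i))^2 = q^{m(m+1)}/(q;q)_m^2$, and the contribution of length $k = 2m+1$ multiplies this by the extra factor $q^{m+1}/(1-q^{m+1})$ coming from the unpaired $d_{2m+1}$. Summing the two parities telescopes via $1 + q^{m+1}/(1-q^{m+1}) = 1/(1-q^{m+1})$, producing
\[
F(q) \;=\; \sum_{m \geq 0} \frac{q^{m(m+1)}}{(q;q)_m\,(q;q)_{m+1}}.
\]

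The crux is to recognize this series as $1/(q;q)_\infty$. I would invoke a Durfee-rectangle dissection of an ordinary partition $\lambda$: let $m$ be the largest integer with $\lambda_m \geq m+1$, then peel off an $m \times (m+1)$ rectangle, leaving a partition of at most $m$ parts above (generating function $1/(q;q)_m$) and a partition with parts $\leq m+1$ below (generating function $1/(q;q)_{m+1}$). Alternatively, MacMahon's partition analysis, the route taken in \cite{AndrewsPaule}, evaluates $F(q)$ mechanically. The main obstacle is this final $q$-series identity; the steps leading to it are routine manipulations.
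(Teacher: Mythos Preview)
Your argument is correct: the difference reparametrization gives $\sum_{i\text{ odd}}\pi_i=\sum_j \lceil j/2\rceil d_j$, summing over $k=2m$ and $k=2m+1$ and telescoping yields $F(q)=\sum_{m\ge0}q^{m(m+1)}/((q;q)_m(q;q)_{m+1})$, and the $m\times(m+1)$ Durfee-rectangle dissection (largest $m$ with $\lambda_m\ge m+1$, leaving a partition with at most $m$ parts to the right and one with parts $\le m+1$ below) identifies this with $1/(q;q)_\infty$.  All steps are sound.

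However, this is not the route the paper takes.  The paper does not prove Theorem~\ref{schmidt} directly; it recovers it as the $(m,i)=(2,1)$ instance of Corollary~\ref{firsti}, which in turn falls out of the bijective proof of Theorem~\ref{maintheorem}.  That proof conjugates the $m$-distinct partition to an $m$-flat one, observes that Schmidt weighting becomes a coloring of parts by their residue mod $m$, and then applies Stockhofe's map $\phi$ from $m$-flat to $m$-regular partitions, which preserves both the reduced weight and the color profile.  For $m=2$ the colors degenerate to a single color and one simply obtains Schmidt's theorem.  Your generating-function computation is closer in spirit to the partition-analysis argument of \cite{AndrewsPaule} that the paper cites, and is arguably the quickest self-contained proof of the $m=2$ case; what it does not buy you is the uniform bijection across all moduli $m$ and all subsets $S$ of counted places, together with the refinement by the difference vector $\vec{\rho}$, which is the point of the paper's machinery.
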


Andrews and Paule's reproof employed MacMahon's partition analysis, noting that this is a very natural approach to such theorems.  They then extended this theorem to partition diamonds with Schmidt type conditions on the places summed, proving generating functions and congruences for such partitions.  Their surfacing of Schmidt's idea has led to a spate of  investigations (\cite{BerkovichUncu}, \cite{Ji}, \cite{LiYee}) considering this concept in the context of modern partition theory.

Our goal in this paper is to extend and refine Schmidt's theorem in a different direction.  Our main theorem, proved bijectively using a colored version of the map of Stockhofe, is a great generalization of the partitions considered and the places summed.

\begin{theorem}\label{maintheorem} Fix $m > 2$.  Let $S = \{ s_1, s_2, \dots, s_i \} \subseteq \{1, 2, \dots, m-1 \}$ with $1 \in S$, and $\vec{\rho} = (\rho_1, \dots, \rho_{m-1})$. Denote by $P_{m,S}(n;\vec{\rho})$ the number of partitions $\lambda = (\lambda_1, \dots, )$ into parts repeating less than $m$ times in which \begin{align*} n &= \sum_{c \equiv s_j \pmod{m}} \lambda_c \\ \rho_k &= \sum_{c \equiv k \pmod{m}} \lambda_k - \lambda_{k+1}.\end{align*}  Then $P_{m,S}(n;\vec{\rho})$ is also equal to the number of partitions of $n$ where parts $k \pmod{i}$ appear in $s_{k+1} - s_k$ colors, letting $s_{i+1} = m$, and, labeling colors of parts $k \pmod{i}$ by $s_k$ through $s_{k+1}-1$, parts of color $j$ appear $\rho_j$ times.
\end{theorem}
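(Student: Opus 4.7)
I would prove Theorem~\ref{maintheorem} bijectively, via a colored refinement of Stockhofe's map that sends a partition $\lambda$ with parts repeating fewer than $m$ times to a colored partition of $n$ whose color counts match $\vec\rho$.  The construction is driven by the sequence of successive differences $d_c := \lambda_c - \lambda_{c+1} \geq 0$, which parameterizes $\lambda$; the bounded-multiplicity hypothesis is equivalent to the gap condition that no $m-1$ consecutive $d_c$ vanish.

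The first step is to reformulate the weight by interchanging summations:
\[
n \;=\; \sum_{c \bmod m \,\in\, S} \lambda_c \;=\; \sum_{k \geq 1} N_S(k)\, d_k, \qquad N_S(k) := |\{c \leq k : c \bmod m \in S\}|.
\]
Writing $k = (t-1)m + r$ with $t \geq 1$ and $r \in \{1,\dots,m\}$, a direct calculation gives $N_S(k) = (t-1)\,i + \kappa(r)$, where $\kappa(r) := |S \cap \{1,\dots,r\}|$ is the unique index such that $s_{\kappa(r)} \leq r < s_{\kappa(r)+1}$ (with the convention $s_{i+1} = m$).  In particular $N_S(k) \equiv \kappa(r) \pmod{i}$, which is exactly the residue class in which the theorem places parts of color $r$.

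The bijection then converts each unit of $d_c$ at a position with $c \bmod m = r \in \{1,\dots,m-1\}$ into one colored part of color $r$ and value $N_S(c)$.  The total count of color-$r$ parts produced is $\sum_{c \equiv r \pmod m} d_c = \rho_r$, and by the previous step the values lie in the correct residue class mod $i$.  The drops at positions $c \equiv 0 \pmod m$ do not create new colored parts, but each such unit (contributing the multiple $ti$ of $i$ to $n$) is absorbed into an existing colored part by a Stockhofe-type ``stacking'' move that raises the value of that part by a multiple of $i$, preserving both color and residue class mod $i$.

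The principal obstacle is the construction of the inverse map.  Given a colored partition satisfying the color and multiplicity constraints, one must recover a sequence $(d_c)$ satisfying the gap condition; the delicate point is to allocate the ``excess'' in each part's value (beyond its base contribution $\kappa(r)$) uniquely among the positions $c \equiv 0 \pmod m$.  I would follow Stockhofe's canonical ordering of colored parts (lexicographically by color and then by value) and read off the sequence block-by-block, showing that the resulting allocation is forced and that the gap condition automatically holds.  Once this canonical inverse is in place, statistic preservation is immediate from Step~1 and the bijection is complete.
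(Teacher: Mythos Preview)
Your proposal is correct and is essentially the paper's proof in different dress: your difference sequence $(d_c)$ records the multiplicities of parts in the conjugate $\lambda'$, your gap condition is $m$-flatness of $\lambda'$, your $N_S(k)$ is precisely the counted weight of a length-$k$ row of $\lambda'$, and your ``Stockhofe-type stacking'' for positions $c\equiv 0\pmod m$ is exactly the paper's application of Stockhofe's map $\phi$ to the colored $m$-flat partition $\lambda'$.  Your ``principal obstacle''---the inverse allocation---is therefore already handled by Stockhofe's established bijectivity once you make this identification: the inverse is simply $\phi^{-1}$ followed by conjugation back, and no new block-by-block allocation argument is needed.
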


The sheer number of parameters in this theorem perhaps obfuscates the relationship to Schmidt, so we offer the following corollaries where the generalization may be clearer:

\begin{corollary}\label{firsti} Fix $m > 2$ and $1 \leq i < m$.  Let $R_{m,i}(n)$ be the set of partitions $\lambda_1 + \lambda_2 + \dots $, $\lambda_1 \geq \lambda_2 \geq \dots > 0$,  in which parts can only appear fewer than $m$ times, and in which $n = \sum_{s=0}^\infty \sum_{j=1}^i \lambda_{sm+i}$.  Then $$\sum_{n=0}^\infty R_{m,i}(n)q^n = \frac{1}{(q;q)_\infty {(q^i;q^i)_\infty}^{m-i-1}}.$$
\end{corollary}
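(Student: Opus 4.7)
\emph{The plan} is to derive Corollary \ref{firsti} as an immediate specialization of Theorem \ref{maintheorem}, followed by a brief generating-function computation.  I would apply the theorem with $S = \{1, 2, \dots, i\}$, so that $s_k = k$ for $1 \le k \le i$ and $s_{i+1} = m$.  The indices $c$ with $c \equiv s_j \pmod m$ for some $j$ are then precisely the positions $sm + j$ with $s \ge 0$ and $1 \le j \le i$, which matches the sum defining $R_{m,i}(n)$.  Thus $R_{m,i}(n) = \sum_{\vec\rho} P_{m,S}(n;\vec\rho)$, and Theorem \ref{maintheorem} reinterprets these as colored partitions of $n$.

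Next, I would read off the color palette.  The gap vector is $(s_{k+1}-s_k)_{k=1}^i = (1, 1, \dots, 1, m-i)$, so residues $1, 2, \dots, i-1 \pmod i$ each carry a single color, while the residue class $0 \pmod i$ (the multiples of $i$) carries $m-i$ distinct colors.  Summing over $\vec\rho$ lifts all constraints on the number of parts of each color, leaving unrestricted colored partitions in this palette.

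The generating function then factors as a single copy of $\prod_{n \not\equiv 0 \pmod i}(1-q^n)^{-1}$ multiplied by $m-i$ copies of $(q^i;q^i)_\infty^{-1}$.  Since $\prod_{n \not\equiv 0 \pmod i}(1-q^n)^{-1} = (q^i;q^i)_\infty/(q;q)_\infty$, this product collapses to $\frac{1}{(q;q)_\infty \, (q^i;q^i)_\infty^{m-i-1}}$, as claimed.

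The only real obstacle is the bookkeeping around the color labels in Theorem \ref{maintheorem}: one must confirm that the final ``long'' color block, indexed $s_i = i$ through $s_{i+1}-1 = m-1$, is indeed the one attached to the residue class of multiples of $i$, so that the extra $m-i-1$ color copies land on $(q^i;q^i)_\infty$ and not on some other factor.  Once that identification is verified, the rest of the argument is a routine product manipulation.
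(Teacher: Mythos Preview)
Your proposal is correct and matches the paper's intended approach: the paper presents this as an immediate corollary of Theorem~\ref{maintheorem} without a separate written proof, and your specialization $S=\{1,2,\dots,i\}$ together with the short product computation is exactly the derivation the paper leaves to the reader. Your bookkeeping on the color blocks---that residues $1,\dots,i-1 \pmod i$ each receive one color while multiples of $i$ receive $m-i$ colors---is correct and yields the claimed product.
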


Schmidt's original theorem is the $(m,i) = (2,1)$ case of this corollary.  Its refinement by numbers of parts is:

\begin{corollary}\label{f21} Let $f_{2,1}(n,m)$ denote the number of partitions in $R_{2,1}$, i.e. partitions $\lambda_1 + \lambda_2 + \dots$, $\lambda_1 > \lambda_2 > \lambda_3 > \dots > 0$, in which $$n = \lambda_1 + \lambda_3 + \lambda_5 + \dots,$$ with the additional condition that $$m = \lambda_1 - \lambda_2 + \lambda_3 - \lambda_4 + \lambda_5 - \lambda_6 + \dots .$$ Then $$f_{2,1}(n,m) = p(n,m).$$
\end{corollary}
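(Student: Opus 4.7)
The plan is to obtain Corollary~\ref{f21} as the $m=2$, $S=\{1\}$ specialization of Theorem~\ref{maintheorem}, simply reading off what each side of the bijection becomes.

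First I would match the left-hand side. Taking the base of Theorem~\ref{maintheorem} to be $2$, the restriction that parts repeat fewer than $m=2$ times forces the partition to have distinct parts, matching the definition of $f_{2,1}$. With $S=\{1\}\subseteq\{1\}$, so $s_1=1$ and (setting $s_2=m=2$) the counted positions are those with $c\equiv 1\pmod 2$, giving
\[
n=\lambda_1+\lambda_3+\lambda_5+\cdots,
\]
which is precisely the Schmidt condition. The refinement parameter reads
\[
\rho_1=\sum_{c\equiv 1 \pmod 2}(\lambda_c-\lambda_{c+1})=(\lambda_1-\lambda_2)+(\lambda_3-\lambda_4)+\cdots,
\]
i.e.\ the alternating sum that Corollary~\ref{f21} denotes by $m$. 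Hence $f_{2,1}(n,m)=P_{2,\{1\}}(n;m)$.

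Next I would unwind the right-hand side of Theorem~\ref{maintheorem} in this case. Here $i=|S|=1$, so there is a single residue class modulo $i$, namely every positive integer; each part appears in $s_2-s_1=2-1=1$ color (which we may label simply color~$1$); and the condition that parts of color~$j$ appear $\rho_j$ times reduces, for $j=1$, to the statement that the partition has exactly $\rho_1=m$ parts in total. Thus the right-hand count is $p(n,m)$, the number of partitions of $n$ into exactly $m$ parts. Combining gives $f_{2,1}(n,m)=p(n,m)$.

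The main (and essentially only) obstacle is that Theorem~\ref{maintheorem} is formally stated for $m>2$, whereas the corollary requires the $m=2$ case. I would address this by verifying that the colored Stockhofe map used in the proof of Theorem~\ref{maintheorem} already makes sense when $m=2$: the only admissible choice is $S=\{1\}$, distinctness of parts is then automatic, and the map collapses to a colored refinement of the classical Schmidt bijection. Once this boundary case is confirmed, the corollary is an immediate specialization with no further work required; in particular, no generating-function computation is needed because the bijection itself carries the statistic $\rho_1$ to the number-of-parts statistic.
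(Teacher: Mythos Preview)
Your proposal is correct and is exactly the paper's approach: Corollary~\ref{f21} is not given a separate proof but is stated as the $m=2$, $S=\{1\}$ specialization of Theorem~\ref{maintheorem}, and you have spelled out that specialization accurately. You are also right to flag the formal ``$m>2$'' hypothesis; the paper itself implicitly treats $m=2$ as covered (it calls Schmidt's theorem the $(m,i)=(2,1)$ case immediately before stating the corollary), and the Stockhofe map in Section~\ref{proofs} indeed goes through unchanged for $m=2$.
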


\noindent \textbf{Remark:} In fact, Corollary \ref{f21} can be obtained as a consequence of Mork's original proof of Schmidt's Theorem.

The first nontrivial case not covered by the previous corollaries is the following.

\begin{corollary} The number of partitions in which parts repeat less than 4 times and $$n = \lambda_1 + \lambda _3 + \lambda_5 + \dots$$ is equal to the number of partitions in which even parts appear in one color and odd parts appear in two colors.
\end{corollary}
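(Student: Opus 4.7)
The plan is to derive this corollary as a direct specialization of Theorem~\ref{maintheorem}, followed by summing over the refinement vector $\vec{\rho}$. The substantive content has already been established, so no new bijection or generating function argument is needed.

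First, I would fix parameters $m = 4$ and $S = \{1,3\}$, so that $s_1 = 1$, $s_2 = 3$, $i = 2$, and the convention $s_{i+1} = s_3 = m = 4$ is in force. These satisfy the hypotheses of Theorem~\ref{maintheorem} since $m > 2$, $1 \in S$, and $S \subseteq \{1,2,3\}$.

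Next, I would unpack the two sides. On the ``uncounted'' side, $P_{4,\{1,3\}}(n;\vec{\rho})$ counts partitions with parts repeating fewer than $4$ times and
\[ n \;=\; \sum_{c \equiv 1,\,3 \pmod{4}} \lambda_c \;=\; \lambda_1 + \lambda_3 + \lambda_5 + \cdots, \]
which is exactly the first partition family named in the corollary. On the colored side, parts of residue $k \pmod{2}$ appear in $s_{k+1} - s_k$ colors: odd parts ($k=1$) in $s_2 - s_1 = 2$ colors, and even parts ($k = 2$) in $s_3 - s_2 = 1$ color, with $\rho_1, \rho_2$ tracking the two odd-part color classes and $\rho_3$ tracking the unique even-part class.

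Finally, since the corollary makes no reference to the refinement statistics, I would sum the identity of Theorem~\ref{maintheorem} over all $\vec{\rho} \in \mathbb{Z}_{\geq 0}^{3}$: on the left this drops the conditions on the $\rho_k$, giving the full count of partitions in the stated family; on the right, each color class becomes independently unrestricted, yielding all partitions with even parts in one color and odd parts in two colors. The only (minor) obstacle is making sure that summing over $\vec{\rho}$ genuinely produces independent, unrestricted multiplicities in each color class, which is immediate from the labeling convention in Theorem~\ref{maintheorem} in which $\rho_j$ is precisely the number of parts of color $j$.
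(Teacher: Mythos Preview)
Your proposal is correct and is exactly the approach the paper intends: the corollary is stated without separate proof and is meant to follow directly from Theorem~\ref{maintheorem} upon specializing $m=4$, $S=\{1,3\}$, and then ignoring (summing over) the refinement $\vec{\rho}$. Your identification of the color counts $s_2-s_1=2$ for odd parts and $s_3-s_2=1$ for even parts, and of the sum over places $c\equiv 1,3\pmod 4$ with the odd-indexed places, is precisely the unpacking required.
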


One observes that the latter set is well-known to have the same generating function as the much-studied \emph{overpartitions}.  Hence, this theorem gives another combinatorial set equivalent to overpartitions, furnished with its own statistics.

Cases of the $q$-series identities implied by this theorem can be stated as sum-product identities, the validity of which immediately follows once the generating functions for the respective sets are established.  Tools for establishing these identities, such as the method of weighted words (\cite{AlladiGordon}, \cite{Dousse}), are of considerable utility in the theory of $q$-series.  We give several examples in this paper.

The following two theorems concern the modulus 3.

\begin{theorem}\label{f32} Denote by $p(n,m)$ the number of ordinary partitions of $n$ into exactly $m$ parts.  Let $f_{3,2}(n,m)$ denote the number of partitions in $R_{3,2}$, i.e. partitions $\lambda_1 + \lambda_2 + \dots$, $\lambda_1 \geq \lambda_2 \geq \lambda_3 \geq \dots > 0$, into positive parts repeating not more than twice, in which $$n = \lambda_1 + \lambda_2 + \lambda_4 + \lambda_5 + \lambda_7 + \lambda_ 8 + \dots,$$ with the additional condition that $$m = \lambda_1 - \lambda_3 + \lambda_4 - \lambda_6 + \lambda_7 - \lambda_9 + \dots .$$ Then $$f_{3,2}(n,m) = p(n,m).$$ This equality is equivalent to the truth of the identity $$\sum_{n \geq j \geq 0} \frac{(-1)^j q^{n^2-n+\binom{j+n+1}{2}}(-q;q)_{n-j} \left[ {n \atop j} \right]_{q^2} t^{j+n}}{(tq;q)_{2n} (q^2;q^2)_n} = \frac{1}{(tq;q)_\infty}.$$
\end{theorem}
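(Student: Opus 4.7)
The bijective equality $f_{3,2}(n,m) = p(n,m)$ is immediate from Theorem \ref{maintheorem} applied with modulus $3$ and $S = \{1, 2\}$: here $i = 2$ and $s_3 := 3$, so every residue class modulo $2$ receives $s_{k+1}-s_k = 1$ color, collapsing the colored side of the bijection to ordinary partitions. The total number of parts on that side is $\rho_1 + \rho_2$, and a telescoping computation gives
\[ \rho_1 + \rho_2 = \sum_{c \equiv 1 \pmod 3}(\lambda_c - \lambda_{c+1}) + \sum_{c \equiv 2 \pmod 3}(\lambda_c - \lambda_{c+1}) = \lambda_1 - \lambda_3 + \lambda_4 - \lambda_6 + \lambda_7 - \lambda_9 + \cdots, \]
matching the $m$-statistic of Theorem \ref{f32}. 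Since $\frac{1}{(tq;q)_\infty} = \sum_{n,m} p(n,m) t^m q^n$ is the standard size-and-length generating function for ordinary partitions, this immediately identifies the right-hand side of the claimed $q$-series identity with $\sum f_{3,2}(n,m) t^m q^n$.

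What remains is to derive the stated left-hand sum as an independent expression for that same generating function. The plan is a Durfee-style decomposition of a partition $\lambda$ counted by $f_{3,2}$: parameterize $\lambda$ by a distinguished square of doubled columns whose side length is the outer index $n$, then run an inclusion--exclusion (governed by the inner index $j$) over which adjacent triples $(\lambda_{3k+1},\lambda_{3k+2},\lambda_{3k+3})$ collapse to a single repeated value. Under such a decomposition one expects $q^{n^2-n+\binom{j+n+1}{2}}$ to record the base-shape weight; $\left[{n \atop j}\right]_{q^2}$ and $(-q;q)_{n-j}$ to supply the sieve selector and a residual distinct-part contribution; $(tq;q)_{2n}^{-1}(q^2;q^2)_n^{-1}$ to house the arbitrary parts above and below the shape (with $q^2$ arising naturally from doubled parts contributing in pairs); and $t^{j+n}$ to book-keep the part-count statistic of the shape. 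A natural alternative is MacMahon's partition analysis applied directly to the system of inequalities and the residue condition, in the spirit of Andrews--Paule's treatment of Schmidt's original theorem, mechanically producing the same sum by $\Omega$-elimination.

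The principal obstacle lies in this last step: arranging the decomposition so that every factor in the summand lines up exactly with the claimed closed form. In particular, the $(-1)^j$ must emerge from a carefully set-up inclusion--exclusion, and the appearance of $\left[{n \atop j}\right]_{q^2}$ (rather than the more familiar $q$-binomial) together with $(q^2;q^2)_n$ requires correctly attributing the $q^2$ base to the doubled-part structure while retaining the $q$ base for single parts. Once the exponents, signs, and Pochhammer denominators are verified to agree with those stated, the identity follows immediately from the already-established bijective half of the theorem.
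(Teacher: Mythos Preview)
Your treatment of the bijective half is correct and matches the paper: the equality $f_{3,2}(n,m)=p(n,m)$ is exactly the $m=3$, $S=\{1,2\}$ instance of Theorem~\ref{maintheorem}, and the telescoping computation of $\rho_1+\rho_2$ is right.

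The gap is in the second half. You do not actually establish that the left-hand sum equals $\sum_{n,m} f_{3,2}(n,m)\,t^m q^n$; you describe a hoped-for Durfee-type decomposition with an inclusion--exclusion layer, assign heuristic roles to each factor (``one expects\dots''), and then explicitly flag the matching of exponents, signs, and bases as the ``principal obstacle'' still to be overcome. That is precisely the content of the theorem's $q$-series claim, so at this stage nothing has been proved. Moreover, the particular shape of the summand --- the sign $(-1)^j$, the mixed bases $q$ and $q^2$, the factor $(-q;q)_{n-j}$ --- does not fall out of a standard Durfee-square argument, and you give no concrete sieve whose M\"obius terms would produce it. Saying that partition analysis ``mechanically'' yields the same sum is not a proof either; the $\Omega$-elimination has to be carried out, and in practice the output requires nontrivial simplification to reach a prescribed closed form.

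The paper's route is quite different and fully executable. One introduces $P_N$, the generating function for such partitions with exactly $N$ nonnegative parts (zeros allowed, still at most two equal), tracking $t$ and $q$ as in the theorem. A short combinatorial argument gives the two-term recurrence
\[
P_N=\frac{X_{N-2}P_{N-2}}{1-X_N}+\frac{X_{N-1}P_{N-1}}{1-X_N},
\]
which after the specialization $x_i=q$ for $3\nmid i$, $x_i=1$ for $3\mid i$ becomes a recurrence for the numerator polynomials $\pi_N$. One then proves by induction the closed form
\[
\pi_{3n+r}=\sum_{j=0}^{n}(-1)^j q^{\,n^2-n+r(n+j)+\binom{j+n+1}{2}}(-q;q)_{n-j}\genfrac{[}{]}{0pt}{}{n}{j}_{q^2}\,t^{\,j+n},
\]
so that $\sum_{N\ge 0}P_{3N}$ is literally the left-hand side of the stated identity. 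The point is that the alternating sign and the $q^2$-binomial are not guessed from a diagrammatic picture but forced by solving a concrete three-step recurrence; this is the missing idea in your proposal.
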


\noindent \textbf{Example:} The partitions counted by $p(7,3) = 4$ are $5+1+1$, $4+2+1$, $3+3+1$, and $3+2+2$.  The partitions counted by $f_{3,2}(7,3)$ are $5+2+2$, $4+3+1$, $4+2+2+1$, and $3+3+1+1$.

\begin{theorem}\label{f31} Let $p(n;m_1,m_2)$ be the number of 2-colored partitions in which $m_1$ parts are of the first color and $m_2$ parts are of the second color.  Let $f_{3,1}(n;m_1,m_2)$ denote the number of partitions in $R_{3,1}$, i.e. $\lambda_1 + \lambda_2 + \dots$, $\lambda_1 \geq \lambda_2 \geq \lambda_3 \geq \dots > 0$, into positive parts repeating not more than twice, in which $$n = \lambda_1 + \lambda_4 + \lambda_7 + \dots$$ and $$m_1 = \lambda_1 - \lambda_2 + \lambda_4 - \lambda_5 + \lambda_7 - \lambda_8 + \dots ,$$ $$m_2 = \lambda_2 - \lambda_3 + \lambda_5 - \lambda_6 + \lambda_8 - \lambda_9 + \dots .$$ Then $$f_{3,1}(n;m_1,m_2) = p(n;m_1,m_2).$$  This equality is equivalent to the truth of the identity \begin{multline*}\sum_{N \geq 0} \sum_{{j+k \geq N} \atop {j, k \leq N}} \frac{t_1^j t_2^k (-1)^{j+k-N} q^{\binom{N}{2} + \binom{j+1}{2} + \binom{k+1}{2}} \left[ {N \atop {N-j, N-k, j+k-N}} \right]_q}{(t_1q;q)_N (t_2q;q)_N (q;q)_N} \\ = \frac{1}{(t_1q;q)_\infty(t_2q;q)_\infty}.\end{multline*}
\end{theorem}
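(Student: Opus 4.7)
My plan is to deduce the combinatorial identity $f_{3,1}(n;m_1,m_2) = p(n;m_1,m_2)$ directly from Theorem \ref{maintheorem}, and then to extract the displayed $q$-series identity by identifying both sides as generating functions for the two halves of the bijection. For the combinatorial part, I would specialize Theorem \ref{maintheorem} to $m = 3$ and $S = \{1\}$, so that $i = 1$, $s_1 = 1$, and $s_{i+1} = 3$; then all parts lie in the single residue class modulo $i = 1$ and appear in $s_2 - s_1 = 2$ colors labeled $1$ and $2$, with parts of color $j$ appearing $\rho_j$ times. Setting $\rho_1 = m_1$ and $\rho_2 = m_2$, these are precisely the ordinary $2$-colored partitions of $n$ counted by $p(n;m_1,m_2)$, and Theorem \ref{maintheorem} supplies the required bijection with partitions in $R_{3,1}$ satisfying the three stated numerical conditions.

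The right-hand side of the displayed identity is immediate: the standard Euler product expansion gives
\[
\sum_{n,m_1,m_2 \ge 0} p(n;m_1,m_2)\, q^n t_1^{m_1} t_2^{m_2} = \frac{1}{(t_1q;q)_\infty (t_2q;q)_\infty}.
\]
The harder task is to show that the left-hand sum equals $\sum f_{3,1}(n;m_1,m_2)\, q^n t_1^{m_1} t_2^{m_2}$. To set this up, I would group the parts of $\lambda \in R_{3,1}$ into triples $(\lambda_{3j-2}, \lambda_{3j-1}, \lambda_{3j})$ for $1 \le j \le N$, where $N$ is the number of non-vacant triples, and introduce
\[
\alpha_j = \lambda_{3j-2} - \lambda_{3j-1}, \quad \beta_j = \lambda_{3j-1} - \lambda_{3j}, \quad \gamma_j = \lambda_{3j} - \lambda_{3j+1}
\]
with $\lambda_{3N+1} := 0$. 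The weighted sum telescopes to $n = \sum_{j=1}^N j(\alpha_j + \beta_j + \gamma_j)$, $m_1 = \sum \alpha_j$, $m_2 = \sum \beta_j$, while the restriction that no value repeats three times translates into the local conditions that none of the pairs $(\alpha_j,\beta_j)$, $(\beta_j,\gamma_j)$, and $(\gamma_j,\alpha_{j+1})$ consists of two zeros.

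Given these coordinates, I would evaluate the generating function as an unrestricted triple geometric series corrected by an inclusion--exclusion over the ``not-both-zero'' constraints. The indices $j$ and $k$ in the displayed identity should correspond to the numbers of positions where the $\alpha$- and $\beta$-gaps are forced to vanish after the exclusion step collapses adjacent constraints, giving the sign $(-1)^{j+k-N}$ and the $q$-multinomial $\left[ {N \atop N-j, N-k, j+k-N} \right]_q$; the minimum staircase contribution from positions where a gap is forced to be at least $1$ should produce $q^{\binom{N}{2}+\binom{j+1}{2}+\binom{k+1}{2}}$; and the three factors $(t_1q;q)_N$, $(t_2q;q)_N$, $(q;q)_N$ in the denominator should record the free growth of the $\alpha$-, $\beta$-, and $\gamma$-gaps respectively. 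Once this identification is complete, the identity in the theorem follows from the combinatorial equality established at the outset.

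The main obstacle will be the bookkeeping of the inclusion--exclusion. The constraints on adjacent triples share the variable $\gamma_j$, and the partition may terminate one or two positions into the final triple, so the boundary cases and overlapping constraints need to be organized carefully in order to extract a clean closed sum. Matching the resulting expression to the specific summation range $j+k \ge N$, $j,k \le N$ and to the multinomial form $\left[{N \atop N-j,N-k,j+k-N}\right]_q$ is where the bulk of the analytic work will lie; an alternative, if this becomes unwieldy, would be to verify the identity directly by a $q$-hypergeometric manipulation, e.g.\ by expanding $\prod 1/(t_iq;q)_\infty$ using Euler's theorem and comparing coefficients of $t_1^j t_2^k$ against the inner sum over $N$.
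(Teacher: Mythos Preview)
Your derivation of the combinatorial equality from Theorem~\ref{maintheorem} with $m=3$, $S=\{1\}$ is exactly what the paper does, and your identification of the right-hand side as the Euler product for two-colored partitions is likewise the same.

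Where you diverge is in producing the left-hand side as the generating function for $f_{3,1}$.  The paper does \emph{not} parametrize by the gap variables $\alpha_j,\beta_j,\gamma_j$ and run an inclusion--exclusion.  Instead it lets $Q_N$ be the generating function for partitions into exactly $N$ \emph{nonnegative} parts (with $0$ also restricted to appear at most twice), establishes the second-order recurrence
\[
Q_N=\frac{Y_{N-2}Q_{N-2}+Y_{N-1}Q_{N-1}}{1-Y_N},
\]
writes $Q_N=\kappa_N/\prod_{i=1}^N(1-Y_i)$, and then \emph{verifies by induction} that the closed form for $\kappa_{3N}$ (the $q$-multinomial sum in the theorem) satisfies the induced recurrence for $\kappa_N$.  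Summing $Q_{3N}$ over $N$ gives the left-hand side, and the identity then follows from the bijection; the paper also supplies an independent $q$-hypergeometric verification via a $q$-Chu--Vandermonde reduction and an auxiliary lemma on a double sum indexed by $u_1,u_2,t$.

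Your inclusion--exclusion route is a reasonable alternative and, if it goes through, would explain combinatorially why the specific $q$-multinomial $\left[{N\atop N-j,\,N-k,\,j+k-N}\right]_q$ and the exponent $\binom{N}{2}+\binom{j+1}{2}+\binom{k+1}{2}$ appear.  The cost is exactly what you flag: the three families of ``not-both-zero'' constraints overlap through $\gamma_j$, and the terminal-triple boundary must be reconciled with the paper's device of padding by up to two zeros.  The recurrence approach trades that bookkeeping for a purely mechanical coefficient check, which is why the paper prefers it; your method, if completed, would be more illuminating but less automatic.
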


We further generalize Theorem \ref{f32} to any modulus and give the associated $q$-series identity.

\begin{theorem}\label{genf} Fix $k>1$.  Denote by $p(n,m)$ the number of ordinary partitions of $n$ into exactly $m$ parts.  Let $f_{k,k-1}(n,m)$ denote the number of partitions in $R_{k,k-1}$, i.e. partitions $\lambda_1 + \lambda_2 + \dots$, $\lambda_1 \geq \lambda_2 \geq \lambda_3 \geq \dots > 0$, into positive parts repeating less than $k$ times, in which $$n = \sum_{i=0}^\infty \lambda_i - \sum_{i=0}^\infty \lambda_{ik},$$ with the additional condition that $$m = \lambda_1 - \lambda_k + \lambda_{k+1} - \lambda_{2k} + \lambda_{2k+1} - \lambda_{3k} + \dots .$$ Then $$f_{k,k-1}(n,m) = p(n,m).$$ This equality is equivalent to the truth of the identity \begin{multline*}\sum_{n \geq 0} \frac{ \sum_{i=n}^{(k-1)n} t^i \sum_{j=0}^n (-1)^{n+i+j} q^{(k-1) (\binom{n-j}{2} + i j ) + \binom{i+1}{2}} \left[ {{(k-1)(n-j)} \atop i} \right]_q \left[ {n \atop j} \right]_{q^{k-1}} }{(tq;q)_{{(k-1)}n} (q^{k-1};q^{k-1})_n} \\ = \frac{1}{(tq;q)_\infty}.
\end{multline*}
\end{theorem}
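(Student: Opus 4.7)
The combinatorial equality $f_{k,k-1}(n,m) = p(n,m)$ is a direct specialization of Theorem \ref{maintheorem}. Setting the modulus there equal to $k$ and choosing $S = \{1, 2, \dots, k-1\}$ (so $i = k-1$, $s_j = j$, and $s_{k} = k$ by convention), each residue class of part sizes modulo $k-1$ is assigned $s_{j+1} - s_j = 1$ color. Thus the "colored" partitions produced by the bijection of Theorem \ref{maintheorem} are just ordinary partitions, and $\sum_{j=1}^{k-1} \rho_j$ records the total number of parts. A telescoping
$$\sum_{j=1}^{k-1} \rho_j \;=\; \sum_{j=1}^{k-1}\sum_{c \equiv j \pmod{k}} (\lambda_c - \lambda_{c+1}) \;=\; \sum_{s \geq 0} (\lambda_{sk+1} - \lambda_{(s+1)k})$$
recovers the alternating expression $\lambda_1 - \lambda_k + \lambda_{k+1} - \lambda_{2k} + \dots$ displayed in the theorem, while the $n$-statistic is treated identically, matching Corollary \ref{firsti}. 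Hence the specialization of Theorem \ref{maintheorem} carries $R_{k,k-1}$ with the stated statistics bijectively onto ordinary partitions of $n$ with $m$ parts.

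For the $q$-series identity, I would interpret the right-hand side $1/(tq;q)_\infty$ as the standard generating function $\sum_{n,m} p(n,m)\,t^m q^n$ and aim to identify the left-hand side as the direct generating function
$$F(t,q) \;:=\; \sum_{n,m} f_{k,k-1}(n,m)\, t^m q^n;$$
equality of the two then follows from the combinatorial identity just established. To compute $F(t,q)$ directly, one stratifies partitions in $R_{k,k-1}$ by the number $n$ of nonzero $k$-blocks (i.e.\ the largest $s$ with $\lambda_{sk} > 0$). Such a $\lambda$ uses $(k-1)n$ positions contributing to the size statistic and $n$ drops $\lambda_{sk+1}-\lambda_{(s+1)k}$ contributing to the $t$-statistic. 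MacMahon's partition analysis (or a direct summation over weakly decreasing sequences) yields the "unrestricted" block generating function $1/(tq;q)_{(k-1)n}$; the factor $1/(q^{k-1};q^{k-1})_n$ then encodes the free choices of the $n$ block-end-to-block-start transitions.

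The final step is to enforce the multiplicity condition "no part appears $k$ or more times." This is where the double sum in $i$ and $j$ with alternating signs and the two Gaussian binomials $\left[ {(k-1)(n-j) \atop i} \right]_q$ and $\left[ {n \atop j} \right]_{q^{k-1}}$ arises: each such term excludes configurations in which some collection of part sizes occurs too often, with the quadratic exponents $(k-1)\binom{n-j}{2} + (k-1)ij + \binom{i+1}{2}$ accounting for the staircase shifts that a "forbidden" block of $k$ equal parts produces under the standard $q$-binomial identity. The main obstacle is precisely this bookkeeping: producing the two-level inclusion-exclusion in the correct form, with the inner Gaussian binomial (base $q$) selecting which of the $(k-1)(n-j)$ in-block positions receive the shift, and the outer one (base $q^{k-1}$) selecting which of the $n$ blocks are "forbidden." A cleaner route, which I would pursue first, is to look for a Bailey pair producing the left-hand side above $1/(tq;q)_\infty$; failing that, one can proceed by induction on $k$, bootstrapping from the $k=3$ case of Theorem \ref{f32}.
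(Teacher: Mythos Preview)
Your treatment of the combinatorial half is correct and is exactly how the paper obtains $f_{k,k-1}(n,m)=p(n,m)$: it is the specialization $S=\{1,\dots,k-1\}$ of Theorem~\ref{maintheorem}, with the telescoping you wrote identifying $m=\sum_j\rho_j$.

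For the $q$-series half your overall strategy also matches the paper's: interpret the right side as $\sum p(n,m)t^mq^n$, show the left side is $\sum f_{k,k-1}(n,m)t^mq^n$, and conclude from the bijection. Your identification of the denominator $(tq;q)_{(k-1)n}(q^{k-1};q^{k-1})_n$ is right: it is $\prod_{i=1}^{kn}(1-X_i)$ under the substitution $x_i=q$ for $k\nmid i$ and $x_i=1$ for $k\mid i$. The gap is the numerator. You describe the double sum in $i$ and $j$ as a ``two-level inclusion--exclusion'' imposing the no-$k$-repetition condition, but you do not carry this out, you acknowledge that the bookkeeping is the main obstacle, and your fallback suggestions (a Bailey pair, or induction on $k$ from the $k=3$ case) are speculative. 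None of these is close to a proof: in particular there is no evident inductive link between the $k$ and $k+1$ identities, and no Bailey pair is exhibited.

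The paper closes this gap by a different mechanism. It writes $P_N=\pi_N/\prod_{i=1}^N(1-X_i)$ for partitions into $N$ nonnegative parts with the no-$k$-repetition condition, derives the $(k{-}1)$-term recurrence
\[
\pi_N=\sum_{r=1}^{k-1}X_{N-r}\Bigl(\prod_{s=1}^{r-1}(1-X_{N-s})\Bigr)\pi_{N-r},
\]
and then collapses it to the two-term form $\pi_N=\pi_{N-1}-X_{N-k}\prod_{s=1}^{k-1}(1-X_{N-s})\,\pi_{N-k}$. Writing $\pi_{kN+r}=\sum_i c(kN+r,i)t^i$, the paper proves by induction on $N$ (with the cases $r=0$ and $r>0$ handled separately) that
\[
c(kN+r,i)=\sum_{j=0}^N(-1)^{N+i+j}q^{(k-1)\bigl(\binom{N-j}{2}+ij\bigr)+\binom{i+1}{2}+ir}\begin{bmatrix}(k-1)(N-j)\\ i\end{bmatrix}_q\begin{bmatrix}N\\ j\end{bmatrix}_{q^{k-1}},
\]
the key step in the verification being a single application of the $q$-Vandermonde summation to the inner sum arising from the product $\prod(1-X_{N-s})$. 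Summing $P_{kN}$ over $N$ then gives exactly the left-hand side of the identity. So the missing ingredient in your argument is a concrete derivation of this numerator; the paper supplies it via recursion and induction on $N$ rather than inclusion--exclusion.
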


In Section \ref{backnote} we give the notation we shall use in this paper.  In Section \ref{proofs} we prove the main theorem, which proves the partition statements of Theorems \ref{f32}, \ref{f31}, and \ref{genf}.  In Section \ref{refinements} we prove the $q$-series claims of Theorems \ref{f32}, \ref{f31}, and \ref{genf}.  Finally, in Section \ref{further} we discuss potential extensions and the desirability of alternative proofs.

\phantom{.}

\noindent \textbf{Acknowledgement:} During the preprint phase of drafting this paper, we were contacted by Isaac Konan, who has produced a broad generalization of Stockhofe's map along the lines of Bressoud's energy concept or the method of weighted words, capable of handling large sets of colors with a variety of conditions: see Theorem 2.8 in \cite{Konan}.   In this paper we require only Stockhofe's original map, slightly generalized to permit addition of a particular color in each case; certainly Konan's theorem, with sufficient specializations of colored parts, would imply our main bijective result Thereom \ref{maintheorem} as well.  It is possible that Konan's algorithms applied to a broader class of Schmidt-type conditions produce an even more general theorem and associated $q$-series identities; what these may be, we leave to the reader interested in the attempt.

\section{Background and notation}\label{backnote}

The main bijection we use is a colored generalization of a map arising in the Ph.D. thesis of Dieter Stockhofe \cite{Stockhofe}, which is in German, with an English translation to be found as an appendix to the thesis of the second author, \cite{KeithThesis}.  

Throughout, we treat partitions as weakly decreasing sequences of nonnegative integers $\lambda = (\lambda_1, \dots, \lambda_k)$, with finitely many positive part sizes $\lambda_i \in \mathbb{N}$, and with infinite tails of zeroes suppressed in the notation.

The Ferrers diagram of a partition $\lambda$ is a set of boxes in the fourth quadrant wherein the row with bottom right corners at $(-i,j)$ is populated with $\lambda_i$ boxes.  The \emph{conjugate} $\lambda^{\prime}$ of $\lambda$ is the partition which has Ferrers diagram that is the reflection of this diagram across the line $y=-x$.

\phantom{.}

\noindent \textbf{Example:} The Ferrers diagrams of the partition $(4,2,2,1)$ and its conjugate $(4,3,1,1)$ are illustrated below.

$$\young(\hfil\hfil\hfil\hfil,\hfil\hfil::,\hfil\hfil::,\hfil::) \quad \quad \quad \young(\hfil\hfil\hfil\hfil,\hfil\hfil\hfil:,\hfil:::,\hfil:::)$$

\phantom{.}

\begin{definition} A partition $\lambda = (\lambda_1, \dots, \lambda_k)$ is $m$-\emph{regular} if no part size $\lambda_i$ is divisible by $m$, $m$-\emph{distinct} if no part size repeats $m$ or more times, and $m$-\emph{flat} if differences $\lambda_i - \lambda_{i+1} < m$ for all $i$ (we emphasize that this includes the final positive part).
\end{definition}

It is well known that these three classes are equinumerous by a variety of bijections.  Classically the map from $m$-flat to $m$-regular partitions was performed first by conjugating $\lambda$ and then employing one of many maps from $m$-distinct to $m$-regular partitions, such as Glaisher's.  Stockhofe produced a direct map.  In \cite{KeithXiong} Keith and Xiong employed the map to produce statements matching sets of partitions with desired alternating sum types and length types.

To define our map we need two vector operations on partitions: 

\begin{align*} n\lambda &= (n \lambda_1 , n \lambda_2 , \dots ) \\
\lambda + \mu &= (\lambda_1 + \mu_1, \lambda_2 + \mu_2, \dots ).
\end{align*}

We may now describe Stockhofe's bijection $\phi$ from $m$-flat to $m$-regular partitions.

The following fact is easy to prove:

\begin{lemma} Let $\vec{v} = (v_1, \dots, v_k)$ be a sequence of nonzero residues modulo $m$.  Then there is a unique partition $\lambda(\vec{v})$ which is $m$-regular, $m$-flat, and for which $\lambda({\vec{v}})_i \equiv v_i \pmod{m}$.
\end{lemma}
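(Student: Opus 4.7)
The plan is to construct $\lambda(\vec{v})$ inductively from the tail and observe that at each step the constraints force a unique choice.

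First, I would fix the last part: since $\lambda(\vec{v})_{k+1} = 0$ and the $m$-flatness condition requires $\lambda(\vec{v})_k - \lambda(\vec{v})_{k+1} < m$ along with positivity, we must have $1 \leq \lambda(\vec{v})_k \leq m-1$. Combined with $\lambda(\vec{v})_k \equiv v_k \pmod{m}$ and $v_k \not\equiv 0$, this forces $\lambda(\vec{v})_k = v_k$, the unique representative of $v_k$ in $\{1,\dots,m-1\}$. This also automatically makes $\lambda(\vec{v})_k$ not divisible by $m$.

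Next, I would carry out the inductive step: assume $\lambda(\vec{v})_{i+1}$ has been determined and satisfies the requirements. Then $\lambda(\vec{v})_i$ must lie in the interval $[\lambda(\vec{v})_{i+1},\, \lambda(\vec{v})_{i+1}+m-1]$ (to be weakly larger than $\lambda(\vec{v})_{i+1}$ and to satisfy $m$-flatness) and must be congruent to $v_i$ modulo $m$. That interval consists of $m$ consecutive integers, so it contains exactly one representative of each residue class modulo $m$; thus the congruence condition $\lambda(\vec{v})_i \equiv v_i \pmod{m}$ picks out a unique integer. Since $v_i \not\equiv 0 \pmod m$, this integer is not a multiple of $m$, preserving $m$-regularity. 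Iterating down to $i=1$ constructs $\lambda(\vec{v})$ in a forced manner, giving simultaneously existence and uniqueness.

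I do not anticipate any real obstacle; the only observation needed is that any interval of $m$ consecutive integers contains each residue class exactly once, and that the nonzero residue hypothesis on each $v_i$ is exactly what is needed to keep the chosen part strictly positive (for $i=k$) and not divisible by $m$ (for all $i$). If desired, one can also verify in passing that the construction respects the intended ordering, since $\lambda(\vec{v})_i \geq \lambda(\vec{v})_{i+1}$ is built into the choice of interval at every step.
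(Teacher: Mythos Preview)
Your proof is correct; the inductive construction from the smallest part is exactly the natural argument, and each step is forced precisely as you describe. The paper itself does not supply a proof of this lemma, simply declaring it ``easy to prove,'' so there is nothing further to compare.
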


\phantom{.}

\noindent \textbf{Example:} If $m=5$ and $\vec{v} = (2,3,1,4,4,2)$, then $\lambda(\vec{v}) = (12,8,6,4,4,2)$.

\phantom{.}

It is now easy to see that any $m$-regular partition may be written uniquely as $\lambda(\vec{v}) + m \mu$ for $\vec{v}$ defined by its residue sequence mod $m$, and some $\mu$.  Each part of $\mu^{\prime}$ is no larger than the number of parts in $\lambda(\vec{v})$.  The main insight behind the construction of the map $\phi$ is showing that $\mu$ can be retrieved from an $m$-flat partition in a way that leaves $\lambda(\vec{v})$.

\phantom{.}

\noindent \textbf{Algorithm for } $\phi$: let $\lambda$ be an $m$-flat partition.  Initialize $\mu = ()$, the empty partition.

\phantom{.}

\noindent \textbf{Step 1.} Working from the smallest to the largest part, remove from $\lambda$ any parts divisible by $m$ for which, after removal, the partition is still $m$-flat.  Append these parts to $\mu$.  These will be parts such that

\begin{itemize}
\item $\lambda_i = k m = \lambda_{i-1}$, i.e. all but the first of a repeated part divisible by $m$; 
\item $\lambda_1 = k m$, i.e. the largest part is divisible by $m$; OR
\item parts $\lambda_i = k_i m$, $i>1$, such that $\lambda_{i-1} = k_i m+j_1$, $\lambda_{i+1} = (k_i-1)m + j_2$, with $0 < j_1 < j_2 < m$.
\end{itemize}

The latter, in other words, are multiples of $m$ between two nonmultiples of $m$ that differ by less than $m$.

Call the remaining partition $\lambda^-$.

The remaining parts divisible by $m$ in $\lambda^-$ are all distinct, not the largest (or smallest) parts, and any remaining part $\lambda_i = k_i m$ lies between $\lambda_{i-1} = k_i m + j_1$ and $\lambda_{i+1} = (k_i-1)m+j_2$ with $0 < j_2 \leq j_1 < m$.  Hence it is possible to leave an $m$-flat partition by removing $\lambda_i$ and also subtract $m$ from every larger part.

Working now from the largest to the smallest remaining parts in $\lambda^-$: 

\noindent \textbf{Step 2:} For each remaining part $\lambda_i = k_i m$, remove $\lambda_i$ and subtract $m$ from all parts $\lambda_j$, $j < i$.  Append $\lambda_i + (i-1)m$ to $\mu$ as a part.

After all of these parts are removed, the remaining partition is $m$-flat and $m$-regular, and hence is automatically $\lambda(\vec{v})$ for the vector of nonzero residues of the original partition.  Now construct $$\phi(\lambda) = \lambda(\vec{v}) + m \mu^{\prime}.$$

It is established in \cite{Stockhofe} that any $\mu$ with parts no larger than the number of parts nonzero modulo $m$ can arise from this removal process, and that reversing the process is unique.  That is, there is a unique way to insert a part size reversing Step 2; the reversal of Step 1 for smaller part sizes is immediate.  Hence this is a bijection between the sets of $m$-flat and $m$-regular partitions.

\phantom{.}

\noindent \textbf{Example:} Let $m=5$, $\lambda = (26, 25, 22, 19, 15, 13, 11, 7, 5, 5,3)$.  Set $\mu = ()$ to start.  In Step 1, we remove 5, 5 and 25.  We now have $$\mu = (25,5,5) \quad , \quad \lambda^- = (26, 22, 19, 15, 13, 11, 7, 3).$$

In Step 2 we remove 15, and three more copies of 5 from larger parts.  We append 30 to $\mu$.  We now have $$\mu = (30, 25, 5,5) = 5 (6, 5, 1) \quad , \quad \lambda(\vec{v}) = (21,17,14,13,11,7,3).$$

Finally, conjugate $\mu$ and add.  We obtain \begin{align*}\phi(\lambda) &=  \lambda(\vec{v}) + 5 (3,2,2,2,2,2,1) \\ &= (21,17,14,13,11,7,3) + (20, 10, 10, 10, 10, 5) \\ &= (41, 27, 24, 23, 21, 12, 3).\end{align*}

We will need to generalize this map to colored parts.  Colored integers are simply integers with subscripts: $$\mathbb{N}_j = \{ n_i \vert n \in \mathbb{N}, 1 \leq i \leq j \}.$$

The $n$ are the sizes of the colored integers, and the subscripts $i$ their color.  One may consider partitions into colored parts by summing the sizes of the colored parts, once an order has been assigned the subscripts.  We will order subscripts to weakly decrease among parts of equal size.

When dealing with colored parts, there is not a general convention for the partition vector operations.

For the purpose of this paper, we will take scalar multiplication to be defined for an uncolored scalar, and the resulting parts to have the colors of the corresponding part of $\lambda$, or a colored scalar and an uncolored $\lambda$, and the resulting parts to all have the color of the scalar.

For partition addition, we will only ever be adding summands $\mu$ having a single color which in fact we will wish to avoid; therefore we will assign the color of each part of the sum to be the color of the corresponding part of $\lambda$.

In Section \ref{refinements} we will employ $q$-series techniques to prove sum-product identities associated to two specific cases of the main theorem.  These theorems employ the following standard notation.

The $q$-Pochhammer symbols are \begin{align*}(a;q)_\infty &= \prod_{i=0}^\infty (1-aq^i) \\ (a;q)_n &= \frac{(aq^n;q)_\infty}{(a;q)_\infty}.\end{align*}  From these we may construct the $q$-multinomial coefficients $$\left[ {{B_1+B_2+\dots+B_k} \atop {\{B_1,B_2,\dots,B_k\}}} \right]_q = \frac{(q;q)_{B_1+B_2+\dots+B_k}}{(q;q)_{B_1}(q;q)_{B_2} \dots (q;q)_{B_k}}.$$  A more common notation when $k=2$ in this definition is the $q$-binomial coefficient $$\left[ {A \atop B} \right]_q = \frac{(q;q)_A}{(q;q)_{A-B}(q;q)_B}.$$

The $q$-hypergeometric series is defined by \begin{multline*}{}_r \phi_s \left[ \begin{matrix} a_1, & a_2, & \dots, & a_r \\ b_1, & \dots, & b_s & \end{matrix};q,z \right] \\ = \sum_{n=0}^\infty \frac{(a_1;q)_n \dots (a_r;q)_n}{(b_1;q)_n \dots (b_s;q)_n} \left( (-1)^n q^{\binom{n}{2}} \right)^{1+r-s} z^n.\end{multline*}

Two useful theorems we will need are the $q$-Chu-Vandermonde summation of the special case $${}_2 \phi_1 \left[ \begin{matrix} q^{-n}, & b \\ & c \end{matrix};q, cq^n/b \right] = \frac{(c/b;q)_n}{(c;q)_n}$$ \noindent or equivalently $${}_2\phi_1 \left( \begin{matrix} q^{-n}, & b \\ & c \end{matrix};q;q \right) = \frac{b^n (c/b;q)_n}{(c;q)_n}$$ \noindent and its special case the $q$-Vandermonde summation $$ \sum_j \left[ {m \atop {k-j}} \right]_q \left[ {n \atop j} \right]_q q^{j(m-k+j)} = \left[ {{m+n} \atop k} \right]_q,$$ and finally the $q$-binomial theorem, $$\sum_{n=0}^\infty z^n q^{\binom{n+1}{2}} \left[ {N \atop n} \right]_q = (-zq;q)_N.$$

\section{Proof of Theorem \ref{maintheorem}}\label{proofs}

Fix $m$ and the set of places $S$ mod $m$ to be counted.  In conjugate, the condition of $m$-distinctness becomes $m$-flatness, and now each part size is assigned a weight depending on the number of counted places it intersects.  This is best illustrated by example.

\phantom{.}

Let $m = 5$ and set $S = \{1, 2, 3\}$.  That is, parts must differ by less than 5, and we shall count only places 1, 2, or 3 mod 5.  Consider the partition $$\lambda = (11,11,11,10,10,8,8,7,7,7,7,6,6,5,5,4,4,4,4,3,3,3,2,2,2,1).$$

Conjugate this partition, keeping track of counted parts: the Ferrers diagram of $\lambda^\prime$, with counted parts being filled, is as follows.

$$\young(\blacksquare\blacksquare\blacksquare\hfil\hfil\blacksquare\blacksquare\blacksquare\hfil\hfil\blacksquare\blacksquare\blacksquare\hfil\hfil\blacksquare\blacksquare\blacksquare\hfil\hfil\blacksquare\blacksquare\blacksquare\hfil\hfil\blacksquare,\blacksquare\blacksquare\blacksquare\hfil\hfil\blacksquare\blacksquare\blacksquare\hfil\hfil\blacksquare\blacksquare\blacksquare\hfil\hfil\blacksquare\blacksquare\blacksquare\hfil\hfil\blacksquare\blacksquare\blacksquare\hfil\hfil:,\blacksquare\blacksquare\blacksquare\hfil\hfil\blacksquare\blacksquare\blacksquare\hfil\hfil\blacksquare\blacksquare\blacksquare\hfil\hfil\blacksquare\blacksquare\blacksquare\hfil\hfil\blacksquare\blacksquare::::,\blacksquare\blacksquare\blacksquare\hfil\hfil\blacksquare\blacksquare\blacksquare\hfil\hfil\blacksquare\blacksquare\blacksquare\hfil\hfil\blacksquare\blacksquare\blacksquare\hfil:::::::,\blacksquare\blacksquare\blacksquare\hfil\hfil\blacksquare\blacksquare\blacksquare\hfil\hfil\blacksquare\blacksquare\blacksquare\hfil\hfil:::::::::::,\blacksquare\blacksquare\blacksquare\hfil\hfil\blacksquare\blacksquare\blacksquare\hfil\hfil\blacksquare\blacksquare\blacksquare:::::::::::::,\blacksquare\blacksquare\blacksquare\hfil\hfil\blacksquare\blacksquare\blacksquare\hfil\hfil\blacksquare:::::::::::::::,\blacksquare\blacksquare\blacksquare\hfil\hfil\blacksquare\blacksquare:::::::::::::::::::,\blacksquare\blacksquare\blacksquare\hfil\hfil:::::::::::::::::::::,\blacksquare\blacksquare\blacksquare\hfil\hfil:::::::::::::::::::::,\blacksquare\blacksquare\blacksquare:::::::::::::::::::::::)$$

It will be observed that, other than the coloration, this is the partition of our example in Section \ref{backnote}, $$\lambda = (26, 25, 22, 19, 15, 13, 11, 7, 5, 5,3).$$

If we count the weights contributed by each part of $\lambda^\prime$, we have $$\vert \lambda^\prime \vert = 16 + 15 + 14 + 12 + 9 + 9 + 7 + 5 + 3 + 3 + 3 = 96.$$  It is clear that effective part sizes that are nonzero mod 3 can arise in exactly one way, whereas multiples of 3 can arise in up to 3 different ways, depending on whether they have zero, one, or two uncounted boxes at the end of the row.

Counting the reduced weight of the partition given by the specified parts and calculating the differences among part sizes in places mod 5, we find that $\lambda$ is counted in $$P_{5,\{1,2,3\}}(96,(2,2,2,1)).$$

Applying $\phi$, recall that we obtained $$\phi(\lambda) = (41, 27, 24, 23, 21, 12, 3).$$  Reduce the weight by counted parts: $3 \lfloor \lambda_i / 5 \rfloor + r$, where $r$ is the appropriate additional colored residue mod 3.  Give reduced parts 1 or 2 mod 3 the subscripts 1 or 2 (which were their original residues mod 5), and give reduced multiples of 3 the subscripts 3 or 4 according to whether the original part was 3 or 4 mod 5.  We obtain that the colored reduction of this partition is $$(25_1, 17_2, 15_4, 15_3, 13_1, 8_2, 3_3).$$  As desired, there are respectively 2, 2, 2, and 1 parts of colors one through four.

\phantom{.}

\begin{proof}[Proof of the theorem] Fix $m$ and $S$ as in the theorem.  When we conjugate an $m$-distinct partition and assign to each part the color of the residue of its natural size mod $m$, the vector counting the differences by place residue mod $m$ becomes exactly the count of parts of each color appearing.  The number of colors available for a part of size $j \pmod{i}$ is exactly 1 more than the number of uncounted boxes that may arise after $j \pmod{i}$ are filled, which is in turn $s_{j+1} - s_j$, with the exception of parts that are multiple of $m$, which will be removed by $\phi$.

If we apply $\phi$ with modulus $m$ to an $m$-flat partition, we observe that adding or subtracting multiples of $m$ to or from a part does not change the color of the part when colored according to this scheme.  This is the reason for our convention on addition of colored parts stated earlier.

The total reduced weight does not change as multiples of $m$ are removed from any one part and added to others in units of $m$ at a time.

Thus the properties of the map immediately imply the equality of partitions in the two sets.
\end{proof}

\phantom{.}

\noindent \textbf{Remark:} Indeed, the bijection preserves not only the total counts but the ordered residue-vector, which is even more precise than the theorem as stated; however, that would have been an even more elaborate statement.  The interested reader may certainly construct the implication from the proof above.

\phantom{.}

\noindent \textbf{Remark:} Andrews and Paule in \cite{AndrewsPaule} also proved that, if one considers the set of arbitrary partitions with no restrictions on parts and only adds parts in odd places, one obtains two-colored partitions.  This is even easier to prove from the colored-conjugate viewpoint: simply observe that in the conjugate, arbitrary partitions are possible and each part size can appear in two different colors.  The places of parts to be counted can be arbitrarily generalized and the resulting part sizes listed, additional colors in a part size occurring for each uncounted place.

Kathy K. Q. Ji in \cite{Ji} gave a different combinatorial proof of that theorem, which lends itself to additional statistical refinements.

\section{Proofs of $q$-series identities}\label{refinements}

\subsection{Proof of Theorem \ref{f32}}\label{f32proof}

We first deal with the $(m,i) = (3,2)$ case, stating that partitions into parts repeating no more than twice with parts counted only in places not divisible by 3, are in bijection with ordinary partitions into exactly $m$ parts when $$m = \lambda_1 - \lambda_3 + \lambda_4 - \lambda_6 + \lambda_7 - \lambda_9 + \dots .$$

Denote $$X_i = x_1 x_2 \dots x_i t^{\chi_3(i)},$$ where $\chi_3(i) = 0 $ if $3 \vert i$ and 1 otherwise.  Let $$P_N = P_N(x_1, x_2, x_3, \dots , x_N;t)$$ be the generating function for partitions with zeros allowed, of the form $$\lambda_1 + \lambda_2 + \dots + \lambda_N \quad \quad (\lambda_1 \geq \lambda_2 \geq \lambda_3 \geq \dots \geq \lambda_N \geq 0)$$ where no part (including 0) appears more than twice, the exponent of $x_i$ is $\lambda_i$, and the exponent of $t$ is $$\lambda_1 - \lambda_3 + \lambda_4 - \lambda_6 + \lambda_7 - \lambda_9 + \dots .$$

A little combinatorial thought gives that the $P_N$ satisfy the recurrence 

\begin{equation}\label{pnrecursion}
P_N = \frac{X_{N-2} P_{N-2}}{1-X_N} + \frac{X_{N-1} P_{N-1}}{1-X_N}, 
\end{equation}

\noindent with $P_0 = 1$, $P_1 = \frac{1}{1-X_1}$, and $P_2 = \frac{1}{(1-X_1)(1-X_2)}$.  This recursion and the initial conditions completely define $P_N$.

\phantom{.}

\noindent \textbf{Remark:} As an aside, while the recurrence is natural once seen, we suggest that the tools of Partition Analysis can be useful in establishing such statements.

\phantom{.}

From (\ref{pnrecursion}) we see that we may write $$P_N = \frac{\pi_N}{\prod_{i=1}^N (1-X_i)}$$ where $\pi_N$ is a polynomial in the $X_i$.  The $\pi_N$ are defined by the initial conditions and recursion $\pi_0 = \pi_1 = \pi_2 = 1$ and for $N>2$, 

\begin{equation}\label{pirecursion}
\pi_N = (1-X_{N-1})X_{N-2} \pi_{N-2} + X_{N-1} \pi_{N-1}.
\end{equation}

We now make the substitution $$x_i = \begin{cases} q &\text{ if } 3 \nmid i \\ 1 &\text{ if } 3 \vert i.\end{cases}$$

This makes $$X_i = q^{\lceil \frac{2i}{3} \rceil}t^{\chi_3(i)}.$$  In doing so, we see that the function $P_N(q,q,1,q,q,1,\dots;t)$ has the coefficient of $q^nt^m$ counting the number of partitions into exactly $N$ nonnegative parts, in which any part size including 0 must appear less than 3 times, with $n = \lambda_1 + \lambda_2 + \lambda_4 + \lambda_5 + \dots$, and in which $m = \lambda_1 - \lambda_3 + \lambda_4 - \lambda_7 + \dots$.

We now give the numerator $\pi_N$.

\begin{lemma}\label{pi3} For $n \geq 0$ and $r \in \{0,1,2\}$, with the above substitutions for the $X_i$, $$\pi_{3n+r}(q,q,1,\dots;t) = \sum_{j=0}^n (-1)^j q^{n^2-n+r(n+j)+\binom{j+n+1}{2}}(-q;q)_{n-j} \left[ {n \atop j}\right]_{q^2} t^{j+n}.$$
\end{lemma}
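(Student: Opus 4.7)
The plan is to prove Lemma \ref{pi3} by induction on $n$, checking that the claimed formula satisfies the recursion (\ref{pirecursion}) after the substitution, which specializes to $X_{3k} = q^{2k}$, $X_{3k-1} = q^{2k}t$, and $X_{3k-2} = q^{2k-1}t$. The base case $n = 0$ is immediate: for every $r \in \{0,1,2\}$ only the $j = 0$ summand survives and evaluates to $1$, matching $\pi_0 = \pi_1 = \pi_2 = 1$.

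For the inductive step I would treat the three values $r \in \{0, 1, 2\}$ in order within each $n$; since the $r = 1$ recursion needs $\pi_{3n}$ and $\pi_{3n-1}$, and the $r = 2$ recursion needs $\pi_{3n+1}$ and $\pi_{3n}$, this ordering ensures all previous values have been established. Consider the $r = 0$ case first. Expanding $(1 - X_{3n-1})X_{3n-2} = q^{2n-1}t - q^{4n-1}t^2$, the recursion becomes
$$\pi_{3n} = q^{2n-1} t \cdot \pi_{3(n-1)+1} - q^{4n-1} t^2 \cdot \pi_{3(n-1)+1} + q^{2n} t \cdot \pi_{3(n-1)+2}.$$
Substituting the inductive formulas and extracting the coefficient of $t^{j+n}$ (with the reindexing $j' = j - 1$ for the $t^2$-shifted middle summand), the arithmetic identities
$$\binom{j+n+1}{2} = \binom{j+n}{2} + (j+n), \qquad \binom{j+n-1}{2} = \binom{j+n}{2} - (j+n-1)$$
allow the common factor $(-1)^j q^{n^2 + j + \binom{j+n}{2}}$ to be pulled out of all four terms. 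What remains is the purely $q$-binomial identity
$$(1 + q^{n-j}) \left[ {n \atop j} \right]_{q^2} = (1 + q^{n+j}) \left[ {n-1 \atop j} \right]_{q^2} + q^{n-j}(1 + q^{n-j}) \left[ {n-1 \atop j-1} \right]_{q^2},$$
which follows from Pascal's rule $\left[ {n \atop j} \right]_{q^2} = \left[ {n-1 \atop j} \right]_{q^2} + q^{2(n-j)} \left[ {n-1 \atop j-1} \right]_{q^2}$ combined with the elementary symmetry $(1 - q^{2j}) \left[ {n-1 \atop j} \right]_{q^2} = (1 - q^{2(n-j)}) \left[ {n-1 \atop j-1} \right]_{q^2}$, both of which are immediate from the product formula for $\left[ {n \atop j} \right]_{q^2}$. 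The cases $r = 1$ and $r = 2$ are analogous: each recursion expands into three summands, the $q$-exponents align after invoking the same binomial identities, and the resulting $q$-binomial statement yields to the same Pascal-plus-symmetry argument with slightly shifted prefactors.

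The main obstacle is not conceptual but algebraic: the $q$-exponents in the three summands of each recursion involve binomials $\binom{j+n+1}{2}$, $\binom{j+n}{2}$, and $\binom{j+n-1}{2}$ that must be brought to a common form, the reindexing of the $t$-shifted middle summand changes its range of $j$ by one so that boundary terms at $j = 0$ and $j = n$ must be checked separately, and three parallel cases must be carried out. Once one case is executed the other two follow the same template, so the bulk of the labor is bookkeeping rather than discovering the right identity.
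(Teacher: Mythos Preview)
Your proposal is correct and follows essentially the same approach as the paper: verify the base cases $\pi_0=\pi_1=\pi_2=1$, specialize the recursion (\ref{pirecursion}) to the three residue classes, and then compare coefficients of each power of $t$. The paper's proof is in fact terser than yours---it simply writes down the three specialized recursions and says ``compare coefficients of $t^k$ on both sides''---so your explicit execution of the $r=0$ case, including the reduction to the Pascal-type identity for $\left[{n\atop j}\right]_{q^2}$, fills in exactly the details the paper leaves to the reader.
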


\begin{proof} We see by inspection that the formula claimed gives 1 for $\pi_0$, $\pi_1$, and $\pi_2$, as required.  It remains to show that the formulas satisfy the recursion (\ref{pirecursion}).

In the three cases of interest, after substitution recursion (\ref{pirecursion}) becomes

\begin{align*}
\pi_{3n+2}(q,q,1,\dots) &= (1-tq^{2n+1}) q^{2n} \pi_{3n} + tq^{2n+1} \pi_{3n+1} \\
\pi_{3n+1}(q,q,1,\dots) &= (1-q^{2n}) tq^{2n} \pi_{3n-1} + q^{2n} \pi_{3n} \\
\pi_{3n}(q,q,1,\dots) &= (1-tq^{2n}) tq^{2n-1} \pi_{3n-2} + tq^{2n} \pi_{3n-1}.
\end{align*}

Now for each case compare coefficients of $t^k$ on both sides.

\end{proof}

Now the denominator of $P_{3N}(q,q,1,\dots;t)$ is $$(tq;q)_{2N}(q^2;q^2)_N$$ and the numerator is $\pi_{3N}$.  

So far from previous work we have that $$\sum_{n,m \geq 0} f_{3,2}(n,m) q^n t^m = \sum_{n \geq 0} P_{3N} (q,q,1,q,q,1,\dots;t).$$  We note that we need only sum over indices $3N$ since $P_{3N}$ will count once any partition of the type counted by $f_{3,2}(n,m)$ into $3N$, $3N-1$, or $3N-2$ positive parts, and so all lengths of positive parts are covered.

The two-variable generating function for $p(n,m)$ is $$\sum_{n,m \geq 0} p(n,m) q^n t^m = \frac{1}{(tq;q)_\infty}.$$

Hence the equality of the Schmidt-type and the colored partitions stated by the main theorem yields the equivalence of the two expressions in sum:

$$\sum_{n \geq j \geq 0} \frac{(-1)^j q^{n^2-n+\binom{j+n+1}{2}}(-q;q)_{n-j} \left[ {n \atop j} \right]_{q^2} t^{j+n}}{(tq;q)_{2n} (q^2;q^2)_n} = \frac{1}{(tq;q)_\infty}.$$

\phantom{.}

\noindent \textbf{Remark:} At this point we observe that the $q$-series identity is completely proved.  The main theorem establishes the equality of the finest subdivisions appearing on either side, which when summed yield the two expressions.  One can proceed generally in this fashion if one has a set of colored partitions to deal with and wishes to prove an associated sum-product identity: attach the colored partitions to a Schmidt-type set, set up the associated recurrence, and hopefully solve.

\phantom{.}

The following work proves the identity by $q$-series analysis, independently of the bijection.  In this and the more complicated next case, the lemmas required and methods employed have some interest in their own right.

We further require the following result.

\begin{lemma}\label{2phi1} For integer $M \geq 0$, $$\sum_{j \geq 0} \frac{(q^{-M};q^2)_j(q^{-M+1};q^2)_jq^{2Mj-j^2}(-1)^j}{(q^2;q^2)_j} = q^{\binom{M}{2}}.$$
\end{lemma}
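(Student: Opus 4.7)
The plan is to convert the two $q^2$-Pochhammers into a single $q$-Pochhammer of double length, simplify the resulting negative-index Pochhammer, reduce the claim to a clean identity on $q$-binomial coefficients, and then prove that identity by a generating-function argument in $M$ using Euler's identity.

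First I would use the elementary interleaving
\[ (q^{-M};q^2)_j(q^{-M+1};q^2)_j = (q^{-M};q)_{2j}, \]
which holds because the two $q^2$-products together supply exactly the $2j$ factors $(1-q^{-M+i})$ for $i=0,1,\ldots,2j-1$. Next, the standard evaluation
\[ (q^{-M};q)_{2j} = q^{-2Mj + \binom{2j}{2}}\,\frac{(q;q)_M}{(q;q)_{M-2j}} \]
is valid for $2j \leq M$, which covers every $j$ for which the summand is nonzero (when $2j > M$ one of the two original $q^2$-Pochhammers already contains a vanishing factor). Substituting and collecting exponents, the combined power of $q$ simplifies to $j(j-1)$, and the claim reduces to
\[ \sum_{j \geq 0} (-1)^j q^{j(j-1)} \left[ {M \atop 2j} \right]_q (q;q^2)_j = q^{\binom{M}{2}}. \]

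The main step is to prove this reduced identity via generating functions in $M$. Multiply both sides by $z^M/(q;q)_M$ and sum over $M \geq 0$. On the right, the $q$-binomial theorem gives $(-z;q)_\infty$. On the left, interchanging the order of summation and using $\sum_{M \geq 2j} z^M/(q;q)_{M-2j} = z^{2j}/(z;q)_\infty$ yields
\[ \frac{1}{(z;q)_\infty}\sum_{j \geq 0} \frac{(-z^2)^j q^{j(j-1)}}{(q^2;q^2)_j} = \frac{(z^2;q^2)_\infty}{(z;q)_\infty} \]
by Euler's identity in base $q^2$. Factoring $(z^2;q^2)_\infty = (z;q)_\infty(-z;q)_\infty$ collapses the right-hand side to $(-z;q)_\infty$, matching the generating function of the RHS. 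Comparing coefficients of $z^M$ in these formal power series then establishes the identity for every $M \geq 0$.

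The main obstacle is only cosmetic bookkeeping—the interchange of sums and the two appeals to Euler's identity are routine in $\mathbb{Z}[[q]][[z]]$—so the real care lies in tracking signs, the half-integer exponent $\binom{2j}{2}$, and the terminating-range conventions consistently through the cascade of substitutions.
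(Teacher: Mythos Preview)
Your proof is correct. The interleaving $(q^{-M};q^2)_j(q^{-M+1};q^2)_j=(q^{-M};q)_{2j}$, the conversion to $(-1)^j q^{j(j-1)}\left[{M\atop 2j}\right]_q(q;q^2)_j$, and the generating-function computation in $z$ all check out line by line; the two appeals to Euler's identity and the factorization $(z^2;q^2)_\infty=(z;q)_\infty(-z;q)_\infty$ are exactly what is needed.

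Your route, however, is genuinely different from the paper's. The paper observes that the sum is (a limit of) a terminating ${}_2\phi_1$ in base $q^2$ and evaluates it in one stroke by the $q$-Chu-Vandermonde summation, splitting into cases according to the parity of $M$. Your argument instead repackages the sum as an identity on $q$-binomials and proves it by passing to a generating function in $M$, using only Euler's identities. The paper's approach is shorter and stays within the hypergeometric framework already set up for the surrounding arguments; yours is more self-contained and elementary, avoiding the limit $\tau\to 0$ and the parity case split, at the cost of an extra layer of generating-function machinery.
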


\begin{proof} The sum in question is $$\lim_{\tau \rightarrow 0} {}_2\phi_1 \left({{q^{-M},q^{-M+1};q^2,q^{2M}\tau^{-1}} \atop {q/\tau \phantom{ \quad ; q/\tau}}} \right) = q^{\binom{M}{2}},$$ by the $q$-Chu-Vandermonde summation (with cases split by whether $M$ is even or odd).
\end{proof}

Multiply both sides by $(tq;q)_\infty$, shift $n \rightarrow n+j$, and simplify to obtain that we may equivalently show $$\sum_{n,j \geq 0} \frac{(-1)^j q^{(n+j)^2-(n+j)+\binom{2j+n+1}{2}}t^{n+2j}(tq^{2n+2j+1};q)_\infty}{(q^2;q^2)_j(q;q)_n} = 1.$$

Expand the infinite product in the numerator with Euler's theorem: $$(tq^{2n+2j+1};q)_\infty = \sum_{m \geq 0} (-1)^m q^{\binom{m+1}{2}+m(2n+2j)}t^m.$$

The desired identity is thus equivalent to

$$\sum_{m,n,j \geq 0} \frac{(-1)^{j+m} q^{(n+j)^2-(n+j)+\binom{2j+n+1}{2}+\binom{m+1}{2}+m(2n+2j)}t^{m+n+2j}}{(q^2;q^2)_j(q;q)_n(q;q)_m} = 1.$$

Certainly the coefficient of $t^0$ on the left-hand side is 1, so it remains to show that the coefficient of $t^N$ is 0 for all $N >0$ to complete the proof.

Set $m = N-n-2j$; the coefficient of $t^N$ is then

\begin{align*}
& \phantom{=} (-1)^N q^{\binom{N+1}{2}} \sum_{n,j \geq 0} \frac{(-1)^{n+j}q^{Nn-n+j^2-j}}{(q^2;q^2)_j(q;q)_n(q;q)_{N-n-2j}} \\
&= (-1)^N q^{\binom{N+1}{2}} \sum_{n \geq 0}\frac{(-1)^{n} q^{Nn-n}}{(q;q)_n(q;q)_{N-n}} \lim_{\tau \rightarrow 0} {}_2\phi_1 \left( {{q^{-N+n},q^{-N+n+1};q^2,q^{2(N-n)}\tau^{-1}} \atop {q/\tau \phantom{ \quad ; q/\tau}}} \right) \\
&= (-1)^N q^{\binom{N+1}{2}} \sum_{n \geq 0} \frac{(-1)^n q^{Nn-n+\binom{N-n}{2}}}{(q;q)_n(q;q)_{N-n}} \quad \text{ (by Lemma \ref{2phi1}) } \\
&= \frac{(-1)^N q^{N^2}}{(q;q)_N} \sum_{n\geq 0} (-1)^n q^{\binom{n}{2}} \left[ {N \atop n} \right]_q \\
&= \frac{(-1)^N q^{N^2}}{(q;q)_N}(1;q)_N = 0,
\end{align*}

\noindent where the last line follows from the $q$-binomial theorem, as long as $N > 0$.

\hfill $\Box$

\subsection{Proof of Theorem \ref{f31}.} Recall that we are now showing the equivalence of $p(n;m_1,m_2)$, the number of partitions of $n$ into two colors of which $m_1$ parts are the first color and $m_2$ parts are the second color, and $f_{3,1}(n;m_1,m_2)$, counting the number of partitions $\lambda_1 + \lambda_2 + \dots$, $\lambda_1 \geq \lambda_2 \geq \lambda_3 \geq \dots $, into positive parts repeating not more than twice, in which $$n = \lambda_1 + \lambda_4 + \lambda_7 + \dots$$ and $$m_1 = \lambda_1 - \lambda_2 + \lambda_4 - \lambda_5 + \lambda_7 - \lambda_8 + \dots ,$$ $$m_2 = \lambda_2 - \lambda_3 + \lambda_5 - \lambda_6 + \lambda_8 - \lambda_9 + \dots .$$ 

Some of the machinery of the proof is analogous to the previous.  We here note the major differences.

Denote $Y_i = x_1 x_2 \dots x_i t(i)$, where $$t(i) = \begin{cases} t_1 &\text{ if } i \equiv 1 \pmod{3} \\ t_2 &\text{ if } i \equiv 2 \pmod{3} \\ 1 &\text{ if } i \equiv 0 \pmod{3}.\end{cases}$$

Let $Q_N(x_1,x_2,\dots ; t_1,t_2)$ generate such partitions with $N$ nonnegative parts.  Then $$Q_N = \frac{Y_{N-2} Q_{N-2}}{1-Y_N} + \frac{Y_{N-1} Q_{N-1}}{1-Y_N}$$ with $Q_0=1$, $Q_1 = \frac{1}{1-Y_1}$, $Q_2 = \frac{1}{(1-Y_1)(1-Y_2)}$.

We find that $Q_N(q,1,1,q,1,1,\dots ; t_1,t_2) = \frac{\kappa_N}{\prod_{i=1}^N (1 - q^{\lceil i/3 \rceil} t(i) )}$ and show the following lemma:

\begin{lemma} We have \begin{multline*}\kappa_{3N}(q,t_1,t_2) = \\ \sum_{{j+k \geq N} \atop {j, k \leq N}} t_1^j t_2^k (-1)^{j+k-N} q^{\binom{N}{2} + \binom{j+1}{2} + \binom{k+1}{2}} \left[ {N \atop {N-j, N-k, j+k-N}} \right]_q\end{multline*} and $$\kappa_{3N+1}(q,t_1,t_2) = \kappa_{3N}(q,t_1q,t_2) \quad, \quad \kappa_{3N+2} = \kappa_{3N}(q,t_1q, t_2q).$$
\end{lemma}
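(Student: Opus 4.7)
My plan is to mirror the proof of Lemma \ref{pi3}, using strong induction on $N$ driven by a recursion for $\kappa_N$ that comes from the $Q_N$-recurrence. Writing $Q_N = \kappa_N/\prod_{i=1}^N(1-Y_i)$ and clearing denominators gives
$$\kappa_N = (1-Y_{N-1})\,Y_{N-2}\,\kappa_{N-2} + Y_{N-1}\,\kappa_{N-1},$$
with $\kappa_0 = \kappa_1 = \kappa_2 = 1$. Under the substitution $x_i = q$ if $i \equiv 1 \pmod 3$ and $x_i = 1$ otherwise, one computes $Y_{3n} = q^n$, $Y_{3n+1} = q^{n+1}t_1$, and $Y_{3n+2} = q^{n+1}t_2$, and the recursion splits into three specializations indexed by $N \bmod 3$. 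I would verify the three initial cases $N=0,1,2$ by inspection (each closed form reduces to $1$, with the empty-sum convention when $N=0$) and then induct, producing $\kappa_{3n+1}$, $\kappa_{3n+2}$, $\kappa_{3n+3}$ in sequence from the previous three values.

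The two specializations $\kappa_{3N+1}(q,t_1,t_2) = \kappa_{3N}(q,t_1 q,t_2)$ and $\kappa_{3N+2}(q,t_1,t_2) = \kappa_{3N}(q,t_1 q,t_2 q)$ I expect to follow from the recurrence with essentially no coefficient extraction: replacing $t_1$ by $t_1 q$ (resp.\ $t_1 q$ and $t_2 q$) in the closed form shifts powers of $q$ in precisely the way produced by the factors $q^n$ appearing in those two residue classes of the recurrence. Combinatorially this reflects the fact that adjoining one or two zero parts does nothing to the underlying generating function beyond a variable shift.

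The main work, and the main obstacle, is verifying the closed form for $\kappa_{3N+3}$ directly. Substituting the just-established forms for $\kappa_{3N+1}$ and $\kappa_{3N+2}$ into
$$\kappa_{3N+3} = (1 - q^{N+1}t_2)\,q^{N+1}t_1\,\kappa_{3N+1} + q^{N+1}t_2\,\kappa_{3N+2},$$
collecting everything as a triple sum over the trinomial indices, and then extracting the coefficient of $t_1^{j}t_2^{k}$, I would try to reduce the remaining inner sum to a form of $q$-Vandermonde summation as stated in Section \ref{backnote}. The $q$-exponent bookkeeping — matching $\binom{N+1}{2} + \binom{j+1}{2} + \binom{k+1}{2}$ on the target side against the $q^{n+1}$ factors combined with the shifts $t_i \mapsto t_i q$ inside the inductive hypothesis — will be delicate; this is the technical crux. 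Once the inner summation is identified as $q$-Vandermonde, the trinomial coefficient reassembles and the induction closes.
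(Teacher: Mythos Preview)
Your approach is correct and is exactly the paper's: the paper simply says ``The proof is again by verifying the necessary recurrence,'' meaning initial conditions plus coefficient comparison in the three residue classes, precisely as in Lemma~\ref{pi3}. One small correction: after you extract the coefficient of $t_1^{J}t_2^{K}$ there is no inner sum left---the closed form is already a double sum in $(j,k)$, so each monomial contributes a single $q$-trinomial term, and the verification reduces to a three-term Pascal-type identity among $q$-trinomials rather than anything requiring $q$-Vandermonde.
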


The proof is again by verifying the necessary recurrence.

This having been done, we find that we have (or, when proceeding by $q$-series, would wish to show) the identity following. 

\begin{multline*}\sum_{N \geq 0} \sum_{{j+k \geq N} \atop {j, k \leq N}} \frac{t_1^j t_2^k (-1)^{j+k-N} q^{\binom{N}{2} + \binom{j+1}{2} + \binom{k+1}{2}} \left[ {N \atop {N-j, N-k, j+k-N}} \right]_q}{(t_1q;q)_N (t_2q;q)_N (q;q)_N} \\ = \frac{1}{(t_1q;q)_\infty(t_2q;q)_\infty}.\end{multline*}

Again we prove the above identity independently; the work is rather more involved than the previous theorem, and the methodology may have some independent utility. 

After certain substitutions and simplifications, and noting that the coefficient of $t_1^0 t_2^0$ is 1 on both sides, we find that the above statement is equivalent to the claim that, if either $u_1 > 0$ or $u_2 > 0$ and the other is at least 0, then

$$\sum_{N \geq 0} \sum_{j=0}^{\text{min}(N,u_1)} \sum_{k = N-j}^{\text{min}(N,u_2)} \frac{(-1)^N q^{\binom{N}{2} + (N-j)(u_1-j) + (N-k)(u_2-k)}}{(q)_{N-j}(q)_{u_1-j}(q)_{j+k-N}(q)_{N-k}(q)_{u_2-k}} = 0.$$

An analytic note is in order.  Recall that we defined $$\frac{1}{(q;q)_n} = \frac{(q^{n+1};q)_\infty}{(q;q)_\infty}.$$ This means that $(q;q)_n^{-1}$ is defined for all integers $n$, and in particular has value 0 if $n<0$ (i.e., the 0 term is in the numerator).  Thus we may write the desired identity as 

$$\sum_{N=0}^{u_1+u_2} \sum_{j=0}^{u_1} \sum_{k = 0}^{u_2} \frac{(-1)^N q^{\binom{N}{2} + (N-j)(u_1-j) + (N-k)(u_2-k)}}{(q)_{N-j}(q)_{u_1-j}(q)_{j+k-N}(q)_{N-k}(q)_{u_2-k}} = 0.$$

This in turn is the $t = u_1 + u_2$ case of a more general identity, 

\begin{multline}\label{sumA}\sum_{N=0}^{t} \sum_{j=0}^{u_1} \sum_{k = 0}^{u_2} \frac{(-1)^N q^{\binom{N}{2} + (N-j)(u_1-j) + (N-k)(u_2-k)}}{(q)_{N-j}(q)_{u_1-j}(q)_{j+k-N}(q)_{N-k}(q)_{u_2-k}} \\ = (-1)^t q^{\binom{t+1}{2}} \left[ {{u_1 + u_2 -1} \atop t} \right]_q \frac{1}{(q)_{u_1}(q)_{u_2}}.\end{multline}

In order to establish identity (\ref{sumA}), we prove the following lemma.

\begin{lemma}\label{lemmab} If $u_1, u_2 \geq 0$ and at least one of $u_1$ and $u_2$ is positive, then 
\begin{multline*}\sum_{j=0}^{u_1} \sum_{k = 0}^{u_2} \frac{q^{(t-j)(u_1-j) + (t-k)(u_2-k)}}{(q)_{t-j}(q)_{u_1-j}(q)_{j+k-t}(q)_{t-k}(q)_{u_2-k}} \\ = \left[ {{u_1 + u_2} \atop t} \right]_q \frac{1}{(q)_{u_1} (q)_{u_2}} . \end{multline*}
\end{lemma}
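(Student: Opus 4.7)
The strategy is to evaluate the inner sum in $k$ for fixed $j$, which collapses into a clean closed form, and then match the resulting single sum term by term against the $q$-Chu--Vandermonde expansion of the claimed right-hand side.

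For fixed $j$, substitute $\ell = j+k-t$ in the sum over $k$; the summand becomes
$$S_j = \sum_{\ell \geq 0} \frac{q^{(j-\ell)(u_2-t+j-\ell)}}{(q)_\ell (q)_{j-\ell}(q)_{u_2-t+j-\ell}}.$$
The crucial claim is $S_j = 1/((q)_j(q)_{u_2-t+j})$, with the understanding that both sides vanish when $u_2-t+j<0$ by the convention $1/(q)_n = 0$ for $n<0$. This is the $(A,B)=(j,u_2-t+j)$ specialization of the auxiliary identity
$$\sum_{\ell \geq 0} \frac{q^{(A-\ell)(B-\ell)}}{(q)_\ell (q)_{A-\ell}(q)_{B-\ell}} = \frac{1}{(q)_A (q)_B} \qquad (A,B \geq 0),$$
which is the main obstacle.

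To establish the auxiliary identity, assume $A \leq B$ (the statement is symmetric in $A,B$), set $k = B-A \geq 0$, reindex by $n = A-\ell$, and extract a factor $1/(q)_k$. The identity reduces to
$$\sum_{n \geq 0} \left[ {A \atop n} \right]_q \frac{q^{n(n+k)}}{(q^{k+1};q)_n} = \frac{1}{(q^{k+1};q)_A}.$$
Using $\left[ {A \atop n} \right]_q = (-1)^n q^{An-\binom{n}{2}}(q^{-A};q)_n/(q;q)_n$ and collecting powers of $q$ (with $n^2 - \binom{n}{2} = \binom{n+1}{2}$), the left-hand side is the $b \to \infty$ limit of ${}_2\phi_1(q^{-A}, b; q^{k+1}; q, q^{A+k+1}/b)$, which by the first form of the $q$-Chu--Vandermonde summation recalled in Section~\ref{backnote} equals $(q^{k+1}/b;q)_A / (q^{k+1};q)_A$, tending to $1/(q^{k+1};q)_A$.

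Substituting $S_j = 1/((q)_j(q)_{u_2-t+j})$ back, the left-hand side of the lemma collapses to
$$\sum_j \frac{q^{(t-j)(u_1-j)}}{(q)_{t-j}(q)_{u_1-j}(q)_j(q)_{u_2-t+j}}.$$
Expanding the claimed right-hand side via the $q$-Vandermonde convolution $\left[{{u_1+u_2} \atop t}\right]_q = \sum_j \left[{{u_1} \atop j}\right]_q \left[{{u_2} \atop {t-j}}\right]_q q^{(u_1-j)(t-j)}$ and dividing by $(q)_{u_1}(q)_{u_2}$ yields the identical sum, so the two sides agree termwise.
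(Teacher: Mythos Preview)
Your proof is correct and follows essentially the same route as the paper's: collapse one of the two inner sums to $1/((q)_A(q)_B)$ via a $q$-Chu--Vandermonde evaluation, then identify the remaining single sum with the $q$-Vandermonde expansion of $\left[{u_1+u_2 \atop t}\right]_q/((q)_{u_1}(q)_{u_2})$. The only cosmetic differences are that the paper swaps the roles of $j$ and $k$ (harmless by the $(j,u_1)\leftrightarrow(k,u_2)$ symmetry) and evaluates the auxiliary identity via the $c\to 0$ form of ${}_2\phi_1(q^{-N},b;c;q,q)$ rather than your $b\to\infty$ limit.
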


\noindent \emph{Proof of identity (\ref{sumA}) from the lemma:} Observe that identity (\ref{sumA}) holds in the $t=0$ case since both sides are $1/((q)_{u_1} (q)_{u_2})$.  If one now takes the diffference of identity (\ref{sumA}) evaluated at $t$, minus (\ref{sumA}) evaluated at $t-1$, one obtains Lemma \ref{lemmab} multiplied through by $(-1)^t q^{\binom{t}{2}}$.  Hence summation of Lemma \ref{lemmab} times $(-1)^t q^{\binom{t}{2}}$ evaluated from 0 to any desired $t$ gives us the identity.

\begin{proof}[Proof of the Lemma]
Since $t$ is a constant, we may take $k$ as the outer index of summation and, for each $k$, reindex $j \rightarrow j+t-k$.  We obtain

\begin{multline*}
\sum_{j=0}^{u_1} \sum_{k = 0}^{u_2} \frac{q^{(t-j)(u_1-j) + (t-k)(u_2-k)}}{(q)_{t-j}(q)_{u_1-j}(q)_{j+k-t}(q)_{t-k}(q)_{u_2-k}} \\ = \sum_{k=0}^{u_2} \frac{q^{(t-k)(u_2-k)}}{(q)_{t-k}(q)_{u_2-k}} \times \sum_{j \geq 0} \frac{q^{(k-j)(u_1-j-t+k)}}{(q)_{k-j}(q)_{u_1-t+k-j}(q)_j}.
\end{multline*}

Now consider the $j$-indexed sum, holding $k$, $u_1$, and $t$ as constants.  We find the following.

\begin{multline*}
\sum_{j \geq 0} \frac{q^{(k-j)(u_1-j-t+k)}}{(q)_{k-j}(q)_{u_1-t+k-j}(q)_j} = \frac{1}{(q)_k(q)_{u_1-t+k}} \\ \times \sum_{j\geq 0} \frac{(q^{-k})_jq^{kj-\binom{j}{2}}(q^{-(u_1-t+k)})_jq^{(u_1-t+k)j-\binom{j}{2}}}{(q)_j} \times q^{(k-j)(u_1-j-t+k)} \\
= \frac{q^{ku_1-kt+k^2}}{(q)_k(q)_{u_1-t+k}} \sum_{j \geq 0} \frac{(q^{-k})_j(q^{-(u_1-t+k)})_jq^j}{(q)_j} \\
= \frac{q^{ku_1-kt+k^2+k(-u_1+t-k)}}{(q)_k(q)_{u_1-t+k}} = \frac{1}{(q)_k(q)_{u_1-t+k}}.
\end{multline*}

The summation at the beginning of the last line follows from the $q$-Chu-Vandermonde summation $${}_2\phi_1 \left({{q^{-N},b;q,q} \atop {c \phantom{ \quad ; q/\tau}}} \right) = \frac{b^N (c/b)_N}{(c)_N}.$$

With this summed, our original expression becomes

\begin{multline*}
\sum_{k=0}^{u_2} \frac{q^{(t-k)(u_2-k)}}{(q)_{t-k}(q)_{u_2-k}(q)_k(q)_{u_1-t+k}} \\
= \frac{q^{tu_2}}{(q)_t(q)_{u_2}(q)_{u_1-t}} \sum_{k=0}^{u_2} \frac{q^{-ku_2-kt+k^2}(q^{-t})_k(q^{-u_2})_k}{(q)_k(q^{u_1-t+1})_k} \times q^{tk+u_2k-2\binom{k}{2}} \\
= \frac{q^{tu_2}}{(q)_t(q)_{u_2}(q)_{u_1-t}} {}_2\phi_1 \left({{q^{-t},q^{-u_2};q,q} \atop {q^{u_1-t+1} \phantom{ \quad ; q/\tau}}} \right) \\
= \frac{q^{tu_2}}{(q)_t(q)_{u_2}(q)_{u_1-t}} \frac{q^{-tu_2}(q^{u_1+1})_{u_2}}{(q^{u_1-t+1})_{u_2}} \\
= \frac{(q^{u_1+1})_{u_2}}{(q)_t(q)_{u_2}(q)_{u_1+u_2-t}} \\
= \frac{(q)_{u_1+u_2}}{(q)_t(q)_{u_2}(q)_{u_1+u_2-t}(q)_{u_1}} = \left[ {{u_1+u_2} \atop t} \right]_q \frac{1}{(q)_{u_1}(q)_{u_2}}.
\end{multline*}

With all the desired implications in hand, the original claim follows, and the theorem is proved.
\end{proof}

%\begin{figure}
%\begin{center}
%\includegraphics{TitleHere.pdf}
%\caption{Caption goes here.}
%\label{LabelThis}
%\end{center}
%\end{figure}

\subsection{Proof of Theorem \ref{genf}}

In this section we simply establish the $q$-series describing both sets equated in the theorem, and thereby immediately obtain the identity claimed.

We employ the notation of subsection \ref{f32proof}, and the same logical machinery, now expanded to accommodate the additional parameter $k$.  We write $$\sum_{n,m \geq 0} f_{k,k-1}(n,m) q^n t^m = \sum_{N \geq 0} P_{kN} (q,\dots,1,q,\dots,1,\dots;t).$$  We write $$P_n = \frac{\pi_n}{\prod_{i=1}^n (1-X_i)}$$ and have that the $\pi_n$ satisfy the recurrence \begin{multline*}\pi_n = X_{n-1} \pi_{n-1} + X_{n-2} (1-X_{n-1}) \pi_{n-2} + \dots \\ + X_{n-k+1} (1-X_{n-1})\dots (1-X_{n-k+2}) \pi_{n-k+1}.\end{multline*}

In order to reduce this recurrence somewhat, observe that all terms after the first also appear in the recurrence expanding $\pi_{n-1}$, with an additional factor of $(1-X_{n-1})$, as does one extra term.  Thus we may write \begin{align*} \pi_n &= X_{n-1} \pi_{n-1} + (1-X_{n-1}) \pi_{n-1} \\ & \quad \quad + X_{n-k} (1-X_{n-1})\dots (1-X_{n-k+1}) X_{n-k} \\
&= \pi_{n-1}+ X_{n-k} (1-X_{n-1})\dots (1-X_{n-k+1}) X_{n-k} . \end{align*}

Now for $n \geq 0$ and $0 \leq r < k$, write $$\pi_{kN+r} = \sum_{i=N}^{(k-1)N} t^i c(kN+r, i).$$  The truth of the bounds on $i$ is immediate for $N=0$ and will follow from analysis of the recurrence.  Although we again only require $\pi_{kN}$ to write $f_{k,k-1}$, we require the intermediate values $\pi_{kN+r}$ in order to solve the recurrence.

To prove the identity in the theorem, we will show the following lemma.

\begin{lemma}\label{genlemma} For $N \leq i \leq (k-1)N$, we have $$c(kN+r,i) = \sum_{j=0}^N (-1)^{N+i+j} q^{(k-1) ( \binom{N-j}{2}+ i j ) + \binom{i+1}{2} + i r} \left[{{(k-1)(N-j)} \atop i} \right]_q \left[ {N \atop j} \right]_{q^{k-1}}.$$
\end{lemma}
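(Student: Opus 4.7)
The plan is to induct on $N$, verifying that the claimed formula satisfies the simplified recurrence for $\pi_n$ derived in the preceding paragraph. The base cases $N = 0$, $0 \leq r \leq k-1$, are immediate: since $\pi_r = 1$, we need $c(r, i) = \delta_{i, 0}$, and the claimed formula at $N = 0$ collapses to its single $j = 0$ term, which is nonzero only at $i = 0$ (where it equals $1$).

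The key structural observation is that the formula factors as $c(kN+r, i) = q^{ir}\, c(kN, i)$ for $0 \leq r \leq k-1$, since $r$ appears only in the exponent $q^{ir}$; equivalently, $\pi_{kN+r}(t) = \pi_{kN}(tq^r)$. This separates the inductive step into (II) establishing the formula at $r = 0$, and (I) propagating the factorization to $r = 1, \ldots, k-1$. For (II), apply the recurrence at $n = kN$: the prefactor $X_{k(N-1)} = q^{(k-1)(N-1)}$ carries no $t$, while each $(1 - X_{kN-j}) = (1 - tq^{(k-1)(N-1)+j})$ for $j = 1, \ldots, k-1$ carries a single $t$. The $q$-binomial theorem expands
$$\prod_{j=1}^{k-1}(1 - tq^{(k-1)(N-1)+j}) = \sum_{\ell=0}^{k-1}(-1)^\ell t^\ell q^{\ell(k-1)(N-1) + \binom{\ell+1}{2}} \left[ {k-1 \atop \ell} \right]_q.$$
Multiplying by $\pi_{k(N-1)}$ (inductive hypothesis), extracting $[t^i]$, rewriting $c(kN-1, i) = q^{i(k-1)} c(k(N-1), i)$ via the level-$(N-1)$ factorization, and applying the $q$-Pascal identity $\left[{N \atop j}\right]_{q^{k-1}} - \left[{N-1 \atop j-1}\right]_{q^{k-1}} = q^{(k-1)j}\left[{N-1 \atop j}\right]_{q^{k-1}}$, the claim reduces (after cancellation of $\ell$-independent factors, using $\binom{N-j}{2} - \binom{N-1-j}{2} = N - 1 - j$) to the $q$-Vandermonde convolution
$$\sum_\ell q^{\ell((k-1)(N-1-j) + \ell - i)} \left[{k-1 \atop \ell}\right]_q \left[{(k-1)(N-1-j) \atop i - \ell}\right]_q = \left[{(k-1)(N-j) \atop i}\right]_q,$$
which establishes the formula at $r = 0$.

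For (I), the factorization predicts $c(kN+r, i) - c(kN+r-1, i) = q^{i(r-1)}(q^i - 1) c(kN, i)$ for $1 \leq r \leq k-1$. The right-hand side of the recurrence produces exactly this: precisely one factor of $\prod_{j=1}^{k-1}(1 - X_{kN+r-j})$, the one at $j = r$, is the $t$-free $(1 - q^{(k-1)N})$, while the remaining $k - 2$ factors each carry a $t$; combined with $X_{k(N-1)+r} = q^{(k-1)(N-1)+r} t$ and $\pi_{k(N-1)+r}(t) = \pi_{k(N-1)}(tq^r)$, a parallel $q$-Vandermonde reduction confirms the identity.

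The main obstacle is the algebraic bookkeeping in (II): carefully aligning the $\ell$-independent $q$-exponents so that the residual sum is precisely the $q$-Vandermonde form, and reconciling signs from the expansion. All the tools invoked (the $q$-binomial theorem, $q$-Pascal, and $q$-Vandermonde) are already used elsewhere in the paper; the difficulty is execution rather than a new idea.
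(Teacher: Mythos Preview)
Your approach is correct and essentially the same as the paper's: both induct on the index via the simplified recurrence $\pi_n = \pi_{n-1} - X_{n-k}\prod_{j=1}^{k-1}(1-X_{n-j})\pi_{n-k}$, expand the product with the $q$-binomial theorem, and close with a $q$-Vandermonde convolution together with the $q$-Pascal identity. Your explicit factorization $\pi_{kN+r}(t)=\pi_{kN}(tq^r)$ is a clean way to organize the two cases---the paper treats $r>0$ and $r=0$ separately without naming this relation in the general-$k$ setting (though it does so for $k=3$ in Lemma~4)---but the underlying computation is identical.
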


\begin{proof} If $N=0$ the formula gives the correct value 1, and so we assume inductively that the lemma holds for all smaller values of $kN+r$.  The summation behaves slightly differently depending on whether $r=0$ or not, so we consider the two cases separately.

Let $r>0$.  Then, specializing the $X_i$, the claim becomes \begin{multline*} \sum_{i=N}^{(k-1)N} c(kN+r,i) t^i = \sum_{i=N}^{(k-1)N} c(kN,i) q^{i(r-1)} t^i \\ - t q^{(k-1)(N-1)+r} (1-t q^{(k-1)N+r-1})\dots (1-t q^{(k-1)N+1})(1-q^{(k-1)N}) \\ \times (1-t q^{(k-1)(N-1)+k-1}) \dots (1-t q^{(k-1) (N-1)+r+1}) \\ \times \sum_{i=N-1}^{(k-1)(N-1)} c(k(N-1),i) q^{i r} t^i. 
\end{multline*}

We observe that the bounds on the powers of terms $t^i$ match, as claimed, and now wish to verify that the coefficient of any $t^i$ sums on the right hand side to the desired expression on the left.  

Observe the factor \begin{multline*} \sum_b t^b h_b(q) :=  (1-t q^{(k-1)N+r-1})\dots (1-t q^{(k-1)N+1}) \\ \times (1-t q^{(k-1)(N-1)+k-1}) \dots (1-t q^{(k-1) (N-1)+r+1}) . \end{multline*}  We have that $h_b(q)$ counts (with a weight of $(-1)^b$) partitions into exactly $b$ distinct parts chosen from among $(k-1)(N-1) + r + 1$ to $(k-1)(N-1) + r + (k-2),$ i.e. $b$ copies of $(k-1)(N-1) + r$ plus partitions into $b$ distinct parts ranging from 1 to $k-2$.

Extracting the coefficients of each $t^i$ and invoking the induction hypothesis, we find that for given $k$, $N$, $r$, and $i$, we wish to verify the equality

\begin{multline*} c(kN+r,i) = c(kN,i) q^{i(r-1)} - (1-q^{(k-1)N}) q^{(k-1)(N-1)+r} \\ \times \sum_{b=0}^{k-2} (-1)^b q^{b((k-1)(N-1)+r)+\binom{b+1}{2}} \left[ {{k-2} \atop b} \right]_q c(k(N-1),i-1-b) q^{r(i-1-b)} \\
=  c(kN,i) q^{i(r-1)} - (1-q^{(k-1)N}) q^{(k-1)(N-1)+ri} \\ \times \sum_{b=0}^{k-2} (-1)^b q^{b(k-1)(N-1)+\binom{b+1}{2}} \left[ {{k-2} \atop b} \right]_q c(k(N-1),i-1-b).
\end{multline*}

Inserting the claimed formulas, we thus find that we wish to verify the following equality of sums in $q$:

\begin{multline*} 
\sum_{j=0}^N (-1)^{N+i+j} q^{(k-1)( \binom{N-j}{2} + i j ) + \binom{i+1}{2} + i r} \left[ {{(k-1)(N-j)} \atop i} \right]_q \left[ {N \atop j} \right]_{q^{k-1}} \\
= \sum_{j=0}^N (-1)^{N+i+j} q^{(k-1)( \binom{N-j}{2} + i j ) + \binom{i+1}{2} + i (r-1)} \left[ {{(k-1)(N-j)} \atop i} \right]_q \left[ {N \atop j} \right]_{q^{k-1}} \\
- (1-q^{(k-1)N}) q^{(k-1)(N-1)+ir} \sum_{b=0}^{k-2} (-1)^b q^{b(k-1)(N-1)+\binom{b+1}{2}} \left[ {{k-2} \atop b} \right]_q \\ \times \sum_{\ell=0}^{N-1} (-1)^{N-1+i-1-b+\ell} q^{(k-1)( \binom{N-1-\ell}{2}+(i-1-b)\ell)+\binom{i-b}{2}} \left[ {{(k-1)(N-1-\ell)} \atop {i-1-b}} \right]_q \left[ {{N-1} \atop \ell} \right]_{q^{k-1}}.
\end{multline*}

Clearing a few common factors and gathering like terms, we reduce the desired identity to

\begin{multline*} 
(1-q^i) \sum_{j=0}^N (-1)^{j} q^{(k-1)( \binom{N-j}{2} + i j ) + \binom{i+1}{2}} \left[ {{(k-1)(N-j)} \atop i} \right]_q \left[ {N \atop j} \right]_{q^{k-1}} \\
= (1-q^{(k-1)N}) q^{(k-1)(N-1)+i} \\ \times \sum_{\ell=0}^{N-1} \sum_{b=0}^{k-2} (-1)^\ell q^{(k-1)(b(N-1)+ \ell(i-1-b) + \binom{N-1-\ell}{2}) + \binom{b+1}{2} + \binom{i-b}{2}} \\ \times \left[ {{k-2} \atop b} \right]_q  \left[ {{(k-1)(N-1-\ell)} \atop {i-1-b}} \right]_q \left[ {{N-1} \atop \ell} \right]_{q^{k-1}}.
\end{multline*}

We now employ the $q$-Vandermonde summation on the innermost sum in the index $b$, after pulling forward of the sum all factors not involving $b$.  Note that within this sum, $k$, $i$, $N$, and $\ell$ are fixed.  We have \begin{multline*} \sum_{b=0}^{k-2} q^{b (k-1)(N-1-\ell) +\binom{b+1}{2}+\binom{i-b}{2}} \left[ {{k-2} \atop b} \right]_q \left[ {{(k-1)(N-1-\ell)} \atop {i-1-b}} \right]_q \\ = q^{\binom{i}{2}} \sum_{b=0}^{k-2} q^{b((k-1)(N-1-\ell)-i+b)} \left[ {{k-2} \atop b} \right]_q \left[ {{(k-1)(N-1-\ell)} \atop {i-1-b}} \right]_q \\ = q^{\binom{i}{2}} \left[ {{k-2+(k-1)(N-1-\ell)} \atop {i-1}} \right]_q = q^{\binom{i}{2}} \left[ {{(k-1)(N-\ell)-1} \atop {i-1}} \right]_q. \end{multline*}

We have reduced the desired identity to 

\begin{multline*}
(1-q^i) \sum_{j=0}^N (-1)^j q^{(k-1)(\binom{N-j}{2}+ij)+\binom{i+1}{2}} \left[ {{(k-1)(N-j)} \atop i} \right]_q \left[ {N \atop j} \right]_{q^{k-1}} \\ = (1-q^{(k-1)N}) q^{(k-1)(N-1)+\binom{i+1}{2}} \sum_{\ell=0}^{N-1} (-1)^\ell q^{(k-1)(\binom{N-1-\ell}{2} + (i-1) \ell)} \\ \times \left[ {{N-1} \atop \ell} \right]_{q^{k-1}} \left[ {{(k-1)(N-\ell)-1} \atop {i-1}} \right]_q.
\end{multline*}

Now verification is just a little algebra.  Divide out the factor of $(1-q^{(k-1)N})$ from the right hand side and out of $\left[ {N \atop j} \right]_{q^{k-1}}$ on the left; cancel an additional factor $1-q^{(k-1)(N-j)}$ from the denominator of that symbol and the numerator of $\left[ {{(k-1)(N-j)} \atop i} \right]_q$ to leave $\left[ {{N-1} \atop j} \right]_{q^{k-1}}$; pull in the factor of $1-q^i$ to make the other $q$-binomial $\left[ {{(k-1)(N-\ell)-1} \atop {i-1}} \right]_q$.  Remove $(N-1-j)$ from the binomial under $(k-1)$ in exponent of the power of $q$ in front of the terms on the left hand side to make $\binom{N-1-j}{2}$, and use it to reduce the other summand to $(i-1)j$.

Observe that the $q$-binomial $\left[ {A \atop {A+1}} \right]_q$ = 0, so that the term $j=N$ contributes 0 and can be removed, and we have verified equality.  The claim of the theorem holds for the case $\pi_{kN+r}$, $0 < r < k$.

The logic for $\pi_{kN}$ is similar; we note the significant differences in the verification below.

The required identity is 

\begin{multline*} 
\sum_{j=0}^N (-1)^{N+i+j} q^{(k-1)( \binom{N-j}{2} + i j ) + \binom{i+1}{2}} \left[ {{(k-1)(N-j)} \atop i} \right]_q \left[ {N \atop j} \right]_{q^{k-1}} \\
= \sum_{j=0}^{N-1} (-1)^{N-1+i+j} q^{(k-1)( \binom{N-1-j}{2} + i j ) + \binom{i+1}{2} + i (k-1)} \left[ {{(k-1)(N-1-j)} \atop i} \right]_q \left[ {{N-1} \atop j} \right]_{q^{k-1}} \\
- q^{(k-1)(N-1)} \sum_{b=0}^{k-1} (-1)^b q^{b(k-1)(N-1)+\binom{b+1}{2}} \left[ {{k-1} \atop b} \right]_q \\ \times \sum_{\ell=0}^{N-1} (-1)^{N-1+i-b+\ell} q^{(k-1)( \binom{N-1-\ell}{2}+(i-b)\ell)+\binom{i-b+1}{2}} \left[ {{(k-1)(N-1-\ell)} \atop {i-b}} \right]_q \left[ {{N-1} \atop \ell} \right]_{q^{k-1}}.
\end{multline*}

After cancelling several common terms and applying the $q$-Vandermonde identity to the sum in $b$ again, we make the notationally convenient substitution $j=J-1$ to obtain that we wish to verify the following identity:

\begin{multline*} 
\sum_{j=0}^{N} (-1)^{j} q^{(k-1)( \binom{N-j}{2} + i j )} \left[ {{(k-1)(N-j)} \atop i} \right]_q \left[ {N \atop j} \right]_{q^{k-1}} \\
= \sum_{J=1}^{N} (-1)^{J} q^{(k-1)( \binom{N-J}{2} + i J )} \left[ {{(k-1)(N-J)} \atop i} \right]_q \left[ {{N-1} \atop {J-1}} \right]_{q^{k-1}} \\
+ q^{(k-1)(N-1)} \sum_{\ell=0}^{N-1} (-1)^{\ell} q^{(k-1)( \binom{N-1-\ell}{2}+i\ell} \left[ {{(k-1)(N-\ell)} \atop {i}} \right]_q \left[ {{N-1} \atop \ell} \right]_{q^{k-1}}.
\end{multline*}

Now, combine indices $\ell$ and $J$ as a single index $j$ and note that the $j=0$ and $j=N$ terms on the right match the corresponding terms on the left.  For $1 \leq j \leq N-1$, employ the $q$-Pascal identity $$\left[ {{N-1} \atop {j-1}} \right]_{q^{k-1}} + q^{(k-1) j} \left[ {{N-1} \atop j} \right]_{q^{k-1}} = \left[ {N \atop j} \right]_{q^{k-1}}.$$

Both sides now match and the theorem is proved.

\end{proof}

\section{Further work}\label{further}

One immediate potential route of further investigation would be generalization.  Theorem \ref{genf} is concerned with the sets $R_{n,n-1}$.  A natural question arising is the general $q$-series identity associated to all sets $R_{n,i}$ for all valid $i$.

On the combinatorial side, this technique can be developed much further.  For instance, there is no need to restrict ourselves to not counting parts.  Let $$\vec{w} = (w_1, w_2, w_3, \dots )$$ be a sequence of integer weights, and consider partitions in a set ${\mathcal{D}}$ in which the size statistic associated to $\lambda = (\lambda_1, \dots , \lambda_k)$ is $$\vert \lambda \vert = \vec{w} \cdot \vec{\lambda} = w_1 \lambda_1 + w_2 \lambda_2 + w_3 \lambda_3 + \dots .$$

In this paper we have been dealing only with $\vec{w}$ for which the entries are 0 and 1, and are periodic mod $m$.  But as long as $\vec{w}$ has the combinatorial property that any initial segment satisfies $$w_1 + w_2 + \dots + w_j > 0,$$ then the resulting distribution of sizes has finite coefficients and is thus combinatorially meaningful.  What theorems might result from examining the ensemble of such linear functions on partition sets?

\phantom{.}

\noindent \textbf{Example:} Let $\vec{w} = (1,2,3,2,1,1,2,3,2,1,\dots)$, repeating with period 5.  Then in conjugate one sees that parts arising have values 1, 3, 6, 8, and 9 mod 9.  If we restrict the weighted partitions to have parts repeating less than 5 times and employ the bijection $\phi$, we obtain the following theorem:

\begin{theorem} Partitions of $n$ into parts that are 1, 3, 6, or 8 mod 9 are equinumerous with partitions of $n$ into parts repeating less than 5 times in which parts are counted with weight $\vec{w} = (1,2,3,2,1,\dots)$.
\end{theorem}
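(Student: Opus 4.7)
The plan is to mirror the proof strategy of Theorem \ref{maintheorem}, adapted to a periodic weight vector rather than a 0/1 indicator of counted places. The overall flow is: conjugate the weighted partition (which is 5-distinct), apply Stockhofe's map $\phi$ at modulus 5, and then reinterpret the resulting 5-regular partition as an ordinary partition whose part sizes are the effective row-weights.

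First, I would introduce the partial-weight function $W(h) := w_1 + w_2 + \dots + w_h$. For a partition $\lambda$ whose parts repeat less than 5 times, the conjugate $\lambda'$ is 5-flat, and the box-counting identity $\vec{w}\cdot\vec{\lambda} = \sum_{(i,j) \in \lambda} w_i = \sum_j W(\lambda'_j)$ rewrites the weighted size of $\lambda$ as a sum of $W$-values over the parts of $\lambda'$. For $\vec{w} = (1,2,3,2,1,\dots)$ of period 5 one checks $W(1)=1,\ W(2)=3,\ W(3)=6,\ W(4)=8,\ W(5)=9$, together with the crucial periodic identity $W(h+5) = W(h)+9$. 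Since the $w_i$ are all positive, $W$ is strictly increasing; its restriction to positive integers not divisible by 5 is a bijection onto the positive integers $\equiv 1,3,6,8 \pmod 9$.

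Next, I would apply Stockhofe's map $\phi$ at modulus 5 to the 5-flat partition $\lambda'$, producing a 5-regular partition $\phi(\lambda')$. The key verification is that $\phi$ preserves the $W$-weighted total $\sum_j W(\lambda'_j)$, even though $\phi$ is defined in terms of the ordinary sum. The intermediate operations of $\phi$ either delete parts that are multiples of 5 or shift remaining parts up by 5, so the multiset of nonzero residues of the parts mod 5 is preserved. Writing $W(h) = 9\lfloor h/5\rfloor + W(h \bmod 5)$ and decomposing $\sum h_i = 5\sum\lfloor h_i/5\rfloor + \sum(h_i \bmod 5)$, invariance of $\sum h_i$ under $\phi$ together with invariance of the nonzero residue multiset (noting $W(0)=0$) forces invariance of $\sum \lfloor h_i/5\rfloor$, and hence of $\sum W(h_i)$.

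Finally, I would apply $W$ term by term to $\phi(\lambda')$. Because $\phi(\lambda')$ has no parts divisible by 5, and $W$ bijects non-multiples of 5 monotonically onto positive integers $\equiv 1,3,6,8 \pmod 9$, the image is an ordinary partition of $n$ into parts with those residues; no coloring is needed since each nonzero residue mod 5 lands in a distinct residue class mod 9. Invertibility at every step — conjugation, $\phi$, and applying $W^{-1}$ termwise — establishes the bijection. The main technical obstacle is the $W$-weight preservation under $\phi$; once the residue-multiset invariance is in hand, the rest is routine bookkeeping, and the argument generalizes immediately to any periodic $\vec{w}$ whose residues $W(1),\dots,W(m-1)$ modulo $W(m)$ are distinct nonzero values.
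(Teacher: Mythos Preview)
Your proposal is correct and follows essentially the same route the paper sketches: conjugate to pass from 5-distinct to 5-flat, read off the effective row-weights $W(h)$ (which land in residues $1,3,6,8,9 \pmod 9$), and apply Stockhofe's $\phi$ at modulus 5 to remove the parts of $W$-weight $\equiv 0 \pmod 9$. Your explicit verification that $\phi$ preserves $\sum W(h_i)$, via invariance of both the ordinary sum and the nonzero-residue multiset, supplies the detail the paper leaves implicit.
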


Readers familiar with the Kanade-Russell conjectures \cite{KR} will recognize the product side of one of the remaining open conjectures, widely considered rather difficult at the present time.  Ka\u{g}an Kur\c{s}ug\"{o}z \cite{Kursungoz} has produced positive generating functions for the difference sides; if analysis of the theorem above yields a manipulable generating function which can be shown equal to his function in the relevant instance, the original conjecture would be proved.

\phantom{.} 

Mork's original combinatorial proof of Schmidt's theorem involved a placement of hooks which can also be shown to give the refinement by parts proven here. In fact it is not too difficult to see the following theorem:

\begin{theorem} Among partitions into distinct parts, the subpartition $(\lambda_1, \lambda_3, \lambda_5,\dots)$ which sums to $n$ arises exactly as many times as this same sequence is the list of hooklengths on the diagonal of partitions of $n$: there are $\lambda_{2j-1} - \lambda_{2j+1}-1$ possible positionings of a hook of size $\lambda_{2j-1}$ given any valid positioning of a hook of size $\lambda_{2j+1}$, and also this many possible values of $\lambda_{2j}$, plus 1 for the smallest hook.
\end{theorem}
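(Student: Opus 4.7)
The plan is to fix a strictly decreasing sequence $\Lambda = (\lambda_1, \lambda_3, \dots, \lambda_{2k-1})$ of positive integers summing to $n$ and show that the two collections in the theorem each contain exactly
$$\lambda_{2k-1} \prod_{j=1}^{k-1}(\lambda_{2j-1} - \lambda_{2j+1} - 1)$$
elements whose odd-indexed subsequence (respectively diagonal hook sequence) is $\Lambda$. Summing over all admissible $\Lambda$ will then recover Schmidt's identity together with the claimed refinement simultaneously.

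First I would handle the partitions into distinct parts. A distinct partition with $\Lambda$ as its odd-indexed subsequence is determined by the choices of even-indexed parts $\lambda_{2j}$. For each $j<k$, the strict-decrease condition $\lambda_{2j-1}>\lambda_{2j}>\lambda_{2j+1}$ allows exactly $\lambda_{2j-1}-\lambda_{2j+1}-1$ values of $\lambda_{2j}$. The final even part $\lambda_{2k}$ may be any of $1,2,\dots,\lambda_{2k-1}-1$ or may be absent (making the distinct partition have $2k-1$ positive parts rather than $2k$), giving $\lambda_{2k-1}$ choices in total. Multiplying yields the displayed count.

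Next I would treat the hook side. A partition with Durfee square of size $k$ decomposes uniquely into $k$ nested hooks based at the diagonal cells $(j,j)$, each with arm $a_j=\lambda_j-j$, leg $\ell_j=\lambda'_j-j$, and hooklength $h_j=a_j+\ell_j+1$. The weak decrease of rows and columns forces $a_j>a_{j+1}$ and $\ell_j>\ell_{j+1}$, and conversely any pair of sequences with these strict inequalities and nonnegative entries assembles into a legitimate partition. Fixing $(h_1,\dots,h_k)=\Lambda$, the innermost hook has $a_k$ free in $\{0,1,\dots,\lambda_{2k-1}-1\}$ (with $\ell_k$ then determined), yielding $\lambda_{2k-1}$ configurations. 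For $j<k$, writing $a_j=a_{j+1}+1+x$ and $\ell_j=\ell_{j+1}+1+y$ with $x,y\geq0$ forces $x+y=h_j-h_{j+1}-2=\lambda_{2j-1}-\lambda_{2j+1}-2$, producing $\lambda_{2j-1}-\lambda_{2j+1}-1$ placements of the $j$th hook around the $(j+1)$st. The resulting product matches the distinct-parts count exactly.

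The calculations themselves are elementary; the main subtlety is correctly pairing the ``plus one for the smallest hook'' with its distinct-partition counterpart. On the distinct-parts side, the ``absent'' value of $\lambda_{2k}$ must be included as one of the $\lambda_{2k-1}$ options, and on the hook side, the innermost hook alone has no inner constraint dictating a strict jump in $a$ or $\ell$. Once these two endpoint adjustments are recognized as the same phenomenon, the term-by-term equality, and hence the theorem, is immediate.
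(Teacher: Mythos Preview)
Your proposal is correct and follows precisely the approach the paper intends: the paper does not give a separate proof but embeds the counting argument in the theorem statement itself, and you have faithfully expanded that sketch by verifying that both the even-part choices in a distinct partition and the successive hook placements around the Durfee diagonal are counted by $\lambda_{2j-1}-\lambda_{2j+1}-1$, with the matching ``plus one'' at the smallest index.
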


Theorems concerning the enumeration of hooks are a subject of current interest in combinatorics.  Ji's map \cite{Ji} is one such, for Andrews and Paule's theorem on two-colored partitions.  The $i = m-1$ cases of Corollary \ref{firsti} all state the equivalence of ordinary partitions, and partitions into parts repeating $m$ times in which places 0 mod $m$ are not counted.  Is there a hook-like object whose placements are counted by the potential entries, which specifies a partition?  What other properties would these statistics have?


\begin{thebibliography}{9}

\bibitem{AlladiGordon} K. Alladi and B. Gordon. Generalizations of Schur's partition theorem. Manuscripta Math., 79:113-126, 1993.

\bibitem{AndrewsPaule} G. E. Andrews, P. Paule. MacMahon's Partition Analysis XIII: Schmidt type partitions and modular forms. J. Number Th., https://doi.org/10.1016/j.jnt.2021.09.008

% \bibitem{AndrewsSIPs} Separable Integer Partitions,  https://arxiv.org/abs/2008.06469

\bibitem{BerkovichUncu} A. Berkovich and A. K. Uncu. On finite analogs of Schmidt's problem and its variants. arXiv preprint, https://arxiv.org/abs/2205.00527

\bibitem{Dousse} J. Dousse. The method of weighted words revisited. S\'{e}minaire Lotharingien de Combinatoire \textbf{78B} (2017), Article \#66

\bibitem{Ji} K. K. Q. Ji. A Combinatorial Proof of a Schmidt Type Theorem of Andrews and Paule.  arXiv preprint, https://arxiv.org/abs/2111.03367

\bibitem{KR} S. Kanade and M. C. Russell. IdentityFinder and some new identities of Rogers-Ramanujan type. Exp. Math. 24(4), 419-423 (2015)

\bibitem{KeithThesis} W. J. Keith. Ranks of Partitions and Durfee Symbols. Ph.D. Thesis, Pennsylvania State University, 2007. Published online at http://etda.libraries.psu.edu/theses/approved/WorldWideIndex/ETD-2026/index.html

\bibitem{KeithXiong} W. J. Keith , X. Xiong. Euler's partition theorem for all moduli and new companions to Rogers-Ramanujan-Andrews-Gordon identities.  Ramanujan Journal \textbf{49}, pages 555-565 (2019)

\bibitem{Konan} I. Konan. Weighted words at degree two, II: flat partitions, regular partitions, and application to level one perfect crystals.  arXiv preprint: https://arxiv.org/abs/2002.00300

\bibitem{Kursungoz} K. Kur\c{s}ung\"{o}z. Andrews-Gordon type series for Kanade-Russell conojectures. Annals of Combinatorics volume \textbf{23}, pages 835-888 (2019)

%\bibitem{Konan} I. Konan. Weighted words at degree two, II: flat partitions, regular partitions, and application to level one perfect crystals. arXiv preprint: https://arxiv.org/abs/2002.00300 .

\bibitem{LiYee} R. Li and A. J. Yee. Schmidt type partitions. arXiv preprint, https://arxiv.org/abs/2204.02535

\bibitem{Schmidt} F. Schmidt. Interrupted partitions, Problem 10629, Am. Math. Mon., 104 (1999), pp. 87-88

\bibitem{Stockhofe} D. Stockhofe, Bijektive Abbildungen auf der Menge der Partitionen einer naturlichen Zahl. Ph.D. thesis. Bayreuth. Math. Schr. (10), 1-59 (1982)

%\bibitem{ThPartitions} Andrews, G.E. \underline{The Theory of Partitions}, The Encyclopedia of Mathematics and Its Applications Series, Addison-Wesley Pub. Co., NY, 300 pp. (1976). Reissued, Cambridge University Press, New York, 1998.

\end{thebibliography}
\end{document}